\documentclass[10pt]{article}
\usepackage{amsmath,amssymb,amsthm,pb-diagram,lamsarrow,pb-lams, hyperref, euscript, ulem, mathcomp}
\usepackage[matrix,arrow,curve]{xy}
\usepackage{graphicx}
\usepackage{tabularx}
\usepackage{float}
\usepackage{hyperref}

\usepackage[T1]{fontenc}

\usepackage[sc]{mathpazo}
\linespread{1.05}         

\usepackage[usenames]{color}
\usepackage{colortbl}

\DeclareFontFamily{T1}{pzc}{}
\DeclareFontShape{T1}{pzc}{m}{it}{1.8 <-> pzcmi8t}{}
\DeclareMathAlphabet{\mathpzc}{T1}{pzc}{m}{it}

\textwidth=140mm

\title{Noncommutative covering projections and $K$-homology }

\theoremstyle{plain}
\newtheorem{prop}{Proposition}[section]

\newtheorem{lem}[prop]{Lemma}
\newtheorem{thm}[prop]{Theorem}

\theoremstyle{definition}
\newtheorem{defn}[prop]{Definition}
\newtheorem{empt}[prop]{}
\newtheorem{exm}[prop]{Example}
\newtheorem{rem}[prop]{Remark}

\theoremstyle{remark}

\chardef\bslash=`\\ 





\hfuzz1pc 



\newcommand{\K}{\mathcal{K}}


\newcommand{\Rb}{\mathbb{R}}
\newcommand{\Zb}{\mathbb{Z}}


\newcommand{\eps}{\varepsilon}


\newcommand{\rar}{\rightarrow}

\newbox\ncintdbox \newbox\ncinttbox 
\setbox0=\hbox{$-$} \setbox2=\hbox{$\displaystyle\int$}
\setbox\ncintdbox=\hbox{\rlap{\hbox
    to \wd2{\hskip-.125em \box2\relax\hfil}}\box0\kern.1em}
\setbox0=\hbox{$\vcenter{\hrule width 4pt}$}
\setbox2=\hbox{$\textstyle\int$} \setbox\ncinttbox=\hbox{\rlap{\hbox
    to \wd2{\hskip-.175em \box2\relax\hfil}}\box0\kern.1em}


\begin{document}
\maketitle  \setlength{\parindent}{0pt}
\begin{center}
\author{}
{\textbf{Petr R. Ivankov*}\\
e-mail: * monster.ivankov@gmail.com \\
}
\end{center}

\vspace{1 in}

\begin{abstract}
\noindent

If $X$ is a topological space then there is a natural homomorphism $\pi_1(X)\rightarrow K_1(X)$ from a fundamental group to a $K_1$-homology group. Covering projections depend of fundamental group.  So $K_1$-homology groups are interrelated with covering projections. This article is concerned with a noncommutative analogue of this interrelationship.

\end{abstract}
\tableofcontents

\section{Introduction}
It is known that $K_1(S^1)\approx \mathbb{Z}$. If $x$ is a generator of $K(S^1)$ than there is a natural homomorphism $\varphi_{K}:\pi_1(X)\rightarrow K_1(X)$ given by
\begin{equation}\label{pi1_to_K1_hom}
[f] \mapsto K_1(f)(x)
\end{equation}
where $f$ is a representative of $[f]\in \pi_1(X)$. This homomorphism does not depend on a basepoint because $K_1(X)$ is an abelian group . So the basepoint is omitted. Let $K_{11}(X)\subset K_1(X)$ be the image of $\varphi_K$. Then $K_{11}(X)$ is a homotopical invariant.
\begin{exm}\label{circle_cov}
We have a natural isomorphism $\varphi_{K}:\pi_1(S^1) \rar K_1(S^1)$. From $\pi_1(S^1)=\mathbb{Z}$ it follows that there is a $n$-listed covering projection $f_n:S^1 \rightarrow S^1$ for any $n \in \mathbb{N}$.
\end{exm}.
\begin{exm}\label{n_cirle_cov}
Let  $f: S^1 \rightarrow S^1$ be an $n$ listed covering projection, $C_{f}$ is the mapping cone \cite{spanier:at} of $f$. Then $\pi_1(C_{f})\approx K_1(C_f) \approx\mathbb{Z}_n$ and there is a natural isomorphism $\varphi_{K}: \pi_1(C_{f}) \rar K_1(C_{f})$. There is $n$ - listed universal covering projection $f_n: \widehat{C_{f}}\rightarrow C_{f}$.
\end{exm}\label{n_cirle_ex}
Finitely listed covering projections depend of fundamental group. Any epimorphism $\pi_1(X) \rightarrow \mathbb{Z}$ (resp. $\pi_1(X) \rightarrow \mathbb{Z}_n$) corresponds to the infinite sequence of finitely listed covering projections (resp. an $n$ - listed covering projection). If $\varphi : \pi_1(X) \rightarrow G$ is an epimorphism ($G\approx \mathbb{Z}$ or $G\approx \mathbb{Z}_n$) such that $ \mathrm{ker} \ \varphi_K\subset \mathrm{ker} \ \varphi$ then there is an algebraic construction of these covering projections which is described in this article. A noncommutative analogue of $K_{11}(X)$ is discussed.

This article assumes elementary knowledge of following subjects
\begin{enumerate}
\item Algebraic topology  \cite{spanier:at}.
\item $C^*-$ algebras and $K$-theory \cite{blackadar:ko}, \cite{dixmier_a_r}, \cite{murphy},  \cite{pedersen:ca_aut}.  

\end{enumerate}

Following notation is used.
\newline
\begin{tabular}{|c|c|}
\hline
Symbol & Meaning\\
\hline
$A^+$  & Unitization of $C^*-$ algebra $A$\\
$A_+$  & A positive cone of $C^*-$ algebra $A$\\
$A^G$  & Algebra of $G$ invariants, i.e. $A^G = \{a\in A \ | \ ga=a, \forall g\in G\}$\\
$\hat A$ & Spectrum of  $C^*$ - algebra $A$  with the hull-kernel topology \\
 & (or Jacobson topology)\\
$\mathrm{Aut}(A)$ & Group * - automorphisms of $C^*$  algebra $A$\\
$B(H)$ & Algebra of bounded operators on Hilbert space $H$\\
$B_{\infty}=B_{\infty}(\{z\in \mathbb{C} \ | \ |z|=1\})$  & Algebra of Borel measured functions on the $\{z\in \mathbb{C} \ | \ |z|=1\}$ set. \\
$\mathbb{C}$ (resp. $\mathbb{R}$)  & Field of complex (resp. real) numbers \\
$\mathbb{C}^*$ & $\{z \in \mathbb{C} \ | \ |z| = 1\}$ \\
$C(X)$ & $C^*$ - algebra of continuous complex valued \\
 & functions on topological space $X$\\
$C^b(X)$ & $C^*$ - algebra of bounded  continuous complex valued \\
$H$ &Hilbert space \\
$I = [0, 1] \subset \mathbb{R}$ & Closed unit  interval\\
$G_{tors} \subset G$  & The torsion subgroup of an abelian group\\
$\mathcal{K}(H)$ or $\mathcal{K}$ & Algebra of compact operators on Hilbert space $H$\\
$\mathbb{M}_n(A)$  & The $n \times n$ matrix algebra over $C^*-$ algebra $A$\\

$\mathrm{Map}(X, Y)$  & The set of maps from $X$ to $Y$\\
$M(A)$  & A multiplier algebra of $C^*$-algebra $A$\\
$M^s(A) = M(A \otimes \mathcal{K})$  & Stable multiplier algebra of $C^*-$ algebra $A$\\
$\mathbb{N}$ & Monoid of natural numbers \\
$Q(A)=M(A)/A$  & Outer multiplier algebra of $C^*-$ algebra $A$\\

$Q^s(A)=(M(A \otimes \mathcal{K}))/(A  \otimes \mathcal{K})$  & Stable outer multiplier algebra of $C^*-$ algebra $A$\\

$\mathbb{Q}$  & Field of rational numbers \\
  $\mathrm{sp}(a)$ & Spectrum of element of $C^*$-algebra $a\in A$  \\
$U(H) \subset \mathcal{B}(H) $ & Group of unitary operators on Hilbert space $H$\\
$U(A) \subset A $ & Group of unitary operators of algebra $A$\\

$\mathbb{Z}$ & Ring of integers \\

$\mathbb{Z}_m$ & Ring of integers modulo $m$ \\
$\Omega$ &  Natural contravariant functor from category  of commutative \\ & $C^*$ - algebras, to category of Hausdorff spaces\\\hline
\end{tabular}
\newline
\newline

\section{Galois extensions of $C^*$ - algebras and noncommutative covering projections}
\subsection{General theory}
\begin{empt}{\it Galois extensions}.
Let $G$ be a finite group, a $G$-Galois extensions can be regarded as particular case of Hopf-Galois extensions \cite{hajac:toknotes}, where Hopf algebra is a commutative algebra $C(G)$. Let $A$ be a $C^*$-algebra, let $G\subset \mathrm{Aut}(A)$ be a finite group of $*$- automorphisms. Let $_A\mathcal{M}^G$ be a category of $G$-equivariant modules. There is a pair of adjoint functors $(F,U)$ given by
\begin{equation}\label{f_functor}
F = A \otimes_{A^{G}} -: _{A^G}M \rightarrow _A\mathcal{M}^G;
\end{equation}
\begin{equation}\label{u_functor}
U = (-)^G: _A\mathcal{M}^G \rightarrow _{A^G}\mathcal{M}. 
\end{equation}

The unit and counit of the adjunction $(F, U)$ are given by the formulas
\begin{equation}\nonumber
\eta_{N} : N \rightarrow (A \otimes_{A^{G}} N)^{G}, \ \eta_{N}(n) = 1 \otimes  n;
\end{equation}
\begin{equation}\nonumber
\eps_{M} : A \otimes_{A^{G}} M^{G} \rightarrow M, \ \eps_{M}(a\otimes m) = am.
\end{equation}

Consider a following map
\begin{equation}\label{can_def}
\mathrm{can}: A \otimes_{A^G} A \rightarrow \mathrm{Map}(G, A)
\end{equation}
given by
\begin{equation}\nonumber
a_1 \otimes a_2 \mapsto (g \mapsto a_1 (ga_2)), \ (a_1, a_2 \in A, \ g \in G).
\end{equation}
The $\mathrm{can}$ is a $_A\mathcal{M}^G$ morphism.
\end{empt}
\begin{thm}\label{hoph_galois_def_thm}\cite{morita_hopf_galois}
Let $A$ be an algebra, let $G$ be a finite group which acts on $A$, $(F, U)$ functors given by (\ref{f_functor}), (\ref{u_functor}). Consider the following
statements:
\begin{enumerate}
\item $(F,U)$ is a pair of inverse equivalences;
\item  $(F,U)$ is a pair of inverse equivalences and  $A \in _{A^{G}} \mathcal{M}$ is flat;
\item The  $\mathrm{can}$ is an isomorphism and $A \in_{A^{G}} \mathcal{M}$ is faithfully flat.

\end{enumerate}
These the three conditions are equivalent.
\end{thm}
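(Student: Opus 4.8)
The plan is to establish the four implications $(2)\Rightarrow(1)$, $(1)\Rightarrow(2)$, $(2)\Rightarrow(3)$ and $(3)\Rightarrow(2)$, which together give $(1)\Leftrightarrow(2)\Leftrightarrow(3)$. Now $(2)\Rightarrow(1)$ is immediate, since (2) is (1) with one additional hypothesis. For $(1)\Rightarrow(2)$ one argues that an equivalence of abelian categories is exact, so $F=A\otimes_{A^G}-\colon{}_{A^G}\mathcal{M}\to{}_A\mathcal{M}^G$ is exact; composing with the exact and faithful forgetful functor ${}_A\mathcal{M}^G\to{}_{A^G}\mathcal{M}$ shows that $N\mapsto A\otimes_{A^G}N$ is exact, i.e.\ $A$ is flat over $A^G$. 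The same remark, together with the fact that an equivalence is faithful (so $A\otimes_{A^G}N=0$ forces $N=0$), shows that $A$ is in fact \emph{faithfully} flat over $A^G$; I record this, as it is precisely what is missing for $(2)\Rightarrow(3)$.

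For $(2)\Rightarrow(3)$ it remains only to prove that $\mathrm{can}$ is an isomorphism, and the plan is to recognize it as a counit of the adjunction. Equip $\mathrm{Map}(G,A)$ with the left $A$-module structure $(a\phi)(g)=a\,\phi(g)$ and the $G$-action $(h\cdot\phi)(g)=h\bigl(\phi(h^{-1}g)\bigr)$; a direct check shows that this makes $\mathrm{Map}(G,A)$ an object of ${}_A\mathcal{M}^G$ and that a map $\phi$ is $G$-invariant precisely when $\phi(g)=g(\phi(e))$ for all $g$, so that $\phi\mapsto\phi(e)$ is an isomorphism $\mathrm{Map}(G,A)^G\xrightarrow{\ \sim\ }A$ of $A^G$-modules. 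Under this identification, and under $F(A)=A\otimes_{A^G}A$, the counit $\eps_{\mathrm{Map}(G,A)}\colon A\otimes_{A^G}\mathrm{Map}(G,A)^G\to\mathrm{Map}(G,A)$ sends $a_1\otimes a_2$ to the map $g\mapsto a_1(g a_2)$ --- that is, it \emph{is} $\mathrm{can}$. Since $(F,U)$ is an equivalence every counit is an isomorphism, so $\mathrm{can}$ is an isomorphism; together with the faithful flatness already recorded, this gives (3).

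The implication $(3)\Rightarrow(2)$ is faithfully flat (Galois) descent, and I expect it to be the main obstacle. Flatness is part of the hypothesis, so what must be shown is that the unit $\eta_N$ and the counit $\eps_M$ are isomorphisms. Since $A$ is faithfully flat over $A^G$, a morphism of $A^G$-modules is an isomorphism as soon as it becomes one after applying $A\otimes_{A^G}-$, so it suffices to verify $\eta_N$ and $\eps_M$ after base change along $A^G\to A$. There the isomorphism $\mathrm{can}\colon A\otimes_{A^G}A\xrightarrow{\ \sim\ }\mathrm{Map}(G,A)$ trivializes the $G$-action: it carries the two canonical maps $A\rightrightarrows A\otimes_{A^G}A$ to $a\mapsto(g\mapsto a)$ and $a\mapsto(g\mapsto g a)$, whose equalizer is $A^G$, and more generally it identifies $A\otimes_{A^G}A\otimes_{A^G}N$ with $\mathrm{Map}\bigl(G,A\otimes_{A^G}N\bigr)$ in such a way that $(A\otimes_{A^G}N)^G$ becomes the equalizer of the two structure maps. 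Combined with the exactness of the augmented Amitsur complex $0\to N\to A\otimes_{A^G}N\to A\otimes_{A^G}A\otimes_{A^G}N\to\cdots$, which holds for every $A^G$-module $N$ because $A^G\to A$ is faithfully flat, this yields that $\eta_N$ is an isomorphism; the dual bookkeeping, presenting $A\otimes_{A^G}M$ via $\mathrm{can}$ as cofreely induced from $A\otimes_{A^G}M^G$, gives the same for $\eps_M$. Conceptually this is Grothendieck's faithfully flat descent: the hypothesis ``$\mathrm{can}$ iso'' identifies ${}_A\mathcal{M}^G$ with the category of descent data for $A^G\to A$, and descent identifies the latter with ${}_{A^G}\mathcal{M}$, the equivalence being $(F,U)$. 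Hence $(F,U)$ is an equivalence and $A$ is flat, which is (2); assembling the four implications completes the argument.
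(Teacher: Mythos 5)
The paper contains no proof of this theorem to compare against: it is imported verbatim from \cite{morita_hopf_galois}, and the remark following Definition~\ref{hoph_galois_def} only notes that it is an ``adapted to finite groups'' version of a result proved there. So your argument can only be judged on its own merits. Its architecture is the standard one and is sound: $(2)\Rightarrow(1)$ is trivial; $(1)\Rightarrow(2)$ follows because an equivalence of abelian categories is exact and faithful, and composing $F$ with the exact faithful forgetful functor ${}_A\mathcal{M}^G\to{}_{A^G}\mathcal{M}$ does give (faithful) flatness of $A$; and your identification of $\mathrm{can}$ with the counit $\eps_{\mathrm{Map}(G,A)}$ is correct --- the module structure $(a\phi)(g)=a\phi(g)$, the action $(h\cdot\phi)(g)=h(\phi(h^{-1}g))$, and the isomorphism $\mathrm{Map}(G,A)^G\cong A$, $\phi\mapsto\phi(e)$, all check out, so $(2)\Rightarrow(3)$ is complete.

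The thin spot is the counit half of $(3)\Rightarrow(2)$. Your treatment of the unit $\eta_N$ via the Amitsur complex and the equalizer description of invariants is fine, but ``the dual bookkeeping \dots gives the same for $\eps_M$'' is not yet an argument, and effectivity of descent is precisely where the content of the theorem lives: the reduction ``check $\eps_M$ after applying $A\otimes_{A^G}-$'' does not by itself untwist $A\otimes_{A^G}M$ for an arbitrary $M\in{}_A\mathcal{M}^G$, since $\mathrm{can}$ only trivializes the induced modules. Two standard ways to close this: (a) prove that $\beta_M\colon A\otimes_{A^G}M\to\mathrm{Map}(G,M)$, $a\otimes m\mapsto\bigl(g\mapsto a\,(gm)\bigr)$, is bijective for every $M\in{}_A\mathcal{M}^G$ (not just $M=A$), and then run your equalizer comparison; or (b) use surjectivity of $\mathrm{can}$ to extract Galois coordinates $x_i,y_i\in A$ with $\sum_i x_i\,(g y_i)=\delta_{g,e}$ and exhibit the explicit inverse $m\mapsto\sum_i x_i\otimes\sum_{g\in G}g(y_i m)$ of $\eps_M$, using injectivity of $\mathrm{can}$ to verify the composite in the other order. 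With that step supplied, your proof is a correct and self-contained replacement for the citation.
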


\begin{defn}\label{hoph_galois_def}
If conditions of  theorem \ref{hoph_galois_def_thm} are hold, then $A$ is said to be {\it left
faithfully flat $G$-Galois extension}
\end{defn}
\begin{rem}
Theorem \ref{hoph_galois_def_thm} is an adapted to finite groups version of theorem from \cite{morita_hopf_galois}.
\end{rem}

In case of commutative $C^*$-algebras definition \ref{hoph_galois_def} supplies finitely listed covering projections of topological spaces. However I think that above definition is not quite good analogue of noncommutative covering projections. Noncommutative algebras contains inner automorphisms. Inner automorphisms are rather gauge transformations \cite{gross_gauge} than geometrical ones. So I think that inner automorphisms should be excluded. Importance of outer automorphisms was noted by  Miyashita \cite{miyashita:finite_outer_galois}. It is reasonably take to account outer automorphisms only. I have set more strong condition.   
\begin{defn}\label{gen_in_def}\cite{rieffel_finite_g}
Let  $A$ be $C^*$ - algebra. A *- automorphism $\alpha$ is said to be {\it generalized inner} if is obtained by conjugating with unitaries from multiplier algebra $M(A)$.
\end{defn}
\begin{defn}\label{part_in_def}\cite{rieffel_finite_g}
Let  $A$ be $C^*$ - algebra. A *- automorphism $\alpha$ is said to be {\it partly inner} if its restriction to some non-zero $\alpha$- invariant two-sided ideal is generalized inner. We call automorphism {\it purely outer} if it is not partly inner.
\end{defn}
Instead definitions \ref{gen_in_def}, \ref{part_in_def} following definitions are being used. 
\begin{defn}
Let $\alpha \in \mathrm{Aut}(A)$ be an automorphism. A representation $\rho : A\rightarrow B(H)$ is said to be {\it $\alpha$ - invariant} if a representation $\rho_{\alpha}$ given by
\begin{equation}
\rho_{\alpha}(a)= \rho(\alpha(a))
\end{equation}
is unitary equivalent to $\rho$.
\end{defn}
\begin{defn}
Automorphism $\alpha \in \mathrm{Aut}(A)$ is said to be {\it strictly outer} if for any $\alpha$- invariant representation $\rho: A \rightarrow B(H) $, automorphism $\rho_{\alpha}$ is not a generalized inner automorphism.
\end{defn}
\begin{defn}\label{nc_fin_cov_pr_defn}
Let $A$ be a $C^*$ - algebra and $G \subset \mathrm{Aut}(A)$ be a finite subgroup of * - automorphisms.
An injective * - homomorphism $f : A^G \rar A$ is said to be a {\it noncommutative finite
covering projection} (or {\it noncommutative $G$ - covering projection})  if $f$ satisfies following conditions:
\begin{enumerate}
\item $A$ is a finitely generated equivariant projective left and right $A^G$ Hilbert $C^*$-module.
\item If $\alpha \in G$ then $\alpha$ is strictly outer.
\item $f$ is a left faithfully flat $G$-Galois extension.
\end{enumerate}
The $G$ is said to be {\it covering transformation group} of $f$. Denote by $G(B|A)$ covering transformation group of covering projection $A \rar B$.
\end{defn}

\begin{empt}\label{irreducible_corr}{\it Irreducible representations of noncommutative covering projections}. Let $f:A^G \rar A$ be a noncommutative $G$ -  covering projection. Let $\rho:  A \rar\mathcal{B}(H)$ be an irreducible representation.
Let $g \in G$ and $\rho_g:  A \rar\mathcal{B}(H)$ be such  that
\begin{equation}\nonumber
\rho_g (a)= \rho(ga).
\end{equation}
So it is an action of $G$ on $\hat A$ such that
\begin{equation}\label{action_of_g_on_spectrum}
g \mapsto (\rho \mapsto \rho_g); \ \forall g \in G, \forall \rho \in \hat A.
\end{equation}

Let us enumerate elements of $G$ by integers, i. e. $g_1, ..., g_n \in G , \ n = |G|$ and define action of $\sigma: G \times \{i, ..., n\} \rar \{i, ..., n\}$ such that $\sigma(g, i) = j \Leftrightarrow  g_j = gg_i$
Let $\rho_{\oplus} = \oplus_{g \in G} \rho_g: A\rar B (H^n)$ be such that
\begin{equation}\label{a_act_irr_sum}
\rho_{\oplus} (a)(h_1, ..., h_n)= (\rho(g_1a)h_1, ...,  (\rho(g_na)h_n).
\end{equation}
Let us define such linear action of $G$ on $H^n$ that
\begin{equation}\label{g_act_irr_sum}
g (h_1, ..., h_n)  = (h_{\sigma(g^{-1}, 1)}, ...,h_{\sigma(g^{-1}, n)}).
\end{equation}
From (\ref{a_act_irr_sum}), (\ref{g_act_irr_sum}) it follows that 
\begin{equation}\nonumber
g(ah)=(ga)(gh); \ \forall a\in A, \ \forall g\in G, \ \forall h\in H^n,
\end{equation}
i.e. $H^n \in _A\mathcal{M}^G$. Equivariant representation $\rho_{\oplus}$ defines representation $\eta: A^G \rar B(K)$. $K = \left(H^n\right)^G$. If $\eta$ is not an irreducible then there is a nontrivial
$A^G$ - submodule $N  \varsubsetneq K$. From $ _A\mathcal{M}^G \approx  _{A^G}\mathcal{M}$ it follows that $A \otimes_{A^G} N \varsubsetneq H^n$ is a nontrivial $A$ - submodule. If we identify $H$ with first summand of $H^n$ then $(A \otimes_{A^G} K) \cap H \varsubsetneq  H$ is a nontrivial $A$ - submodule. This fact contradicts with that $\rho$ is irreducible. So $\eta$ is
an irreducible representation. In result we have a natural map 
\begin{equation}\label{spectrum_galois_map}
\hat f : \hat A \rar \widehat{A^G}, \ (\rho \mapsto \eta)
\end{equation}
and
\begin{equation}\label{spectrum_galois_quot}
\widehat{A^G} \approx \hat A / G.
\end{equation}

\end{empt}

\subsection{Covering projection of $C^*$-algebras with continuous trace}
\begin{defn}\label{abelian_element_defn}\cite{pedersen:ca_aut}
A positive element in $C^*$ - algebra $A$ is {\it abelian} if subalgebra $xAx \subset A$ is commutative.
\end{defn}
\begin{prop}\label{abelian_element_proposition}\cite{pedersen:ca_aut}
A positive element $x$ in $C^*$ - algebra $A$ is abelian if $\mathrm{dim} \ \pi(x) \le 1$ for every irreducible representation $\pi: A \rar \mathcal{B}(H)$ of $A$.
\end{prop}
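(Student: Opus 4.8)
The statement to prove is the implication: if $\dim \pi(x) \le 1$ for every irreducible representation $\pi$, then $x$ is abelian, i.e. $xAx$ is commutative. The plan is to prove it directly. First I would record that $xAx = \{xax : a \in A\}$, and that for $a,b \in A$ one has $(xax)(xbx) = xax^2bx$; hence $xAx$ is commutative if and only if $xax^2bx = xbx^2ax$ for all $a,b \in A$. Since the irreducible representations of $A$ separate its points (for $0 \neq c \in A$ pick a pure state $\phi$ with $\phi(c^*c) = \|c\|^2 \neq 0$; its GNS representation is irreducible and does not kill $c$), it suffices to show $\pi(xax^2bx) = \pi(xbx^2ax)$ for every irreducible $\pi : A \to \mathcal{B}(H)$ and all $a,b\in A$.

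Next, fix an irreducible $\pi$. By hypothesis the positive operator $\pi(x)$ has range of dimension at most one. If $\pi(x) = 0$ both sides vanish, so assume the range is one-dimensional, spanned by a unit vector $\eta$; then by the spectral theorem $\pi(x) = \lambda P_\eta$ with $\lambda > 0$ and $P_\eta$ the rank-one projection onto $\mathbb{C}\eta$. The crucial observation is that for any $S \in \mathcal{B}(H)$,
\[
\pi(x)\,S\,\pi(x) = \lambda^2 P_\eta S P_\eta = \lambda^2 \langle S\eta,\eta\rangle\, P_\eta .
\]
Applying this with $S = \pi(ax^2b) = \pi(a)\pi(x)^2\pi(b) = \lambda^2\pi(a)P_\eta\pi(b)$, and using $P_\eta \pi(a) P_\eta = \langle \pi(a)\eta,\eta\rangle P_\eta$ together with $P_\eta\pi(b)P_\eta = \langle\pi(b)\eta,\eta\rangle P_\eta$, one gets
\[
\pi(xax^2bx) = \lambda^4\,\langle\pi(a)\eta,\eta\rangle\,\langle\pi(b)\eta,\eta\rangle\, P_\eta ,
\]
and the formula for $\pi(xbx^2ax)$ is identical except for the order of the two scalar factors. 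As complex numbers commute, the two expressions coincide; hence $xax^2bx = xbx^2ax$ in $A$, and $xAx$ is commutative.

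I do not expect a real obstacle: the whole argument rests on the one-line identity $\pi(x)S\pi(x) = \lambda^2\langle S\eta,\eta\rangle P_\eta$, and the only points demanding care are (i) the reduction to irreducible representations via faithfulness of the atomic representation, and (ii) extracting the normal form $\pi(x) = \lambda P_\eta$ from ``$\dim \pi(x)\le 1$'' and positivity — which is exactly where positivity enters. For completeness the converse can be added to make the statement an equivalence: if some irreducible $\pi$ had $\dim \pi(x) \ge 2$, then, since $\pi(A)$ is strongly dense in $\mathcal{B}(H)$ and $S \mapsto \pi(x)S\pi(x)$ is strongly continuous, $\pi(x)\mathcal{B}(H)\pi(x)$ would have to be commutative; but restricting to the range of $\pi(x)$ and testing commutativity on rank-one operators $\langle\,\cdot\,,\zeta_2\rangle\zeta_1$ yields a contradiction as soon as that range has dimension at least two, so no such $\pi$ can exist when $xAx$ is commutative.
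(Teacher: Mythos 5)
Your proof is correct. Note that the paper does not prove this proposition at all --- it is imported verbatim from Pedersen \cite{pedersen:ca_aut} as a cited fact --- so there is no internal argument to compare against; your route (reduce to irreducible representations via faithfulness of the atomic representation, write the positive rank-one operator $\pi(x)$ as $\lambda P_\eta$ with $\lambda>0$, and use $\pi(x)S\pi(x)=\lambda^2\langle S\eta,\eta\rangle P_\eta$ to see that $\pi\bigl((xax)(xbx)\bigr)$ is symmetric in $a$ and $b$) is the standard one and is essentially the proof in the cited source. The statement as given is only the ``if'' direction, so your closing paragraph on the converse is a harmless extra; the one step worth spelling out there is that the strong closure of a commuting self-adjoint family is still commuting (via $M\subseteq M'$ implies $M''\subseteq M'$), but that does not affect the correctness of the direction actually claimed.
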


\begin{empt} Let $A$ be a $C^*$ - algebra. For each $x\in A_+$ the (canonical) trace $\mathrm{Tr}(\pi(x))$ of $\pi(x)$ depends only on the equivalence class of an irreducible representation $\pi:A \rightarrow B(H)$, so that we may define a function $\hat x : \hat A \rar [0,\infty]$ by $\hat x(t)=\mathrm{Tr}(\pi(x))$ whenever $\pi\in t$. From Proposition 4.4.9 \cite{pedersen:ca_aut} it follows that $\hat x$ is lower semicontinuous function on a in Jacobson topology.
\end{empt}
\begin{defn}\label{continuous_trace_c_a_defn}\cite{pedersen:ca_aut} We say that element $x\in A_+$ has {\it continuous trace} if $\hat x \in C^b(\hat A)$. We say that $A$ is a $C^*$ - algebra {\it with continuous trace} if set of elements with continuous trace is dense in $A_+$. We say that a $C^*$ - algebra $A$ is of type $I$ if each non-zero quotient of $A$ contains non-zero
abelian element. If $A$ is even generated (as $C^*$ - algebra) by its abelian elements we say
that it is of type $I_0$.
\end{defn}

\begin{thm} (Theorem 5.6 \cite{pedersen:ca_aut}) For each $C^*$ - algebra $A$ there is a dense hereditary ideal $K(A)$,
which is minimal among dense ideals.

\end{thm}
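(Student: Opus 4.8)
The plan is to realise $K(A)$ as the intersection of all dense two-sided ideals of $A$, to show that this intersection is already dense (whence it is the minimal dense ideal), and then to deduce heredity from an explicit description of its positive cone.

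First I would record the functional-calculus facts to be used. For $a\in A_+$ and $\epsilon>0$ let $(a-\epsilon)_+$ be obtained from $a$ by continuous functional calculus applied to $t\mapsto\max(t-\epsilon,0)$. Then $(a-\epsilon)_+\in A$, $\|a-(a-\epsilon)_+\|\le\epsilon$, and
\[
(a-\epsilon)_+=\rho(a)^{1/2}\,a\,\rho(a)^{1/2},\qquad \rho(s)=\Bigl(1-\tfrac{\epsilon}{s}\Bigr)_+\ (s>0),\ \ \rho(0)=0,
\]
with $\rho$ continuous and $0\le\rho\le1$; moreover $(a-\epsilon)_+=g(a)\,(a-\epsilon)_+\,g(a)$ for a plateau function $g$ (equal to $1$ on $[\epsilon,\infty)$ and to $0$ on $[0,\epsilon/2]$), and $g$ may be chosen with $g(a)\le\tfrac{2}{\epsilon}(a-\tfrac{\epsilon}{2})_+$. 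In particular every $(a-\epsilon)_+$ carries a local unit, namely $g(a)$.

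Set $K(A):=\bigcap\{\,J\subseteq A : J \text{ a dense two-sided ideal}\,\}$; it is a two-sided ideal. The crucial step is that $(a-\epsilon)_+\in K(A)$ for all $a\in A_+$, $\epsilon>0$, i.e.\ that $(a-\epsilon)_+$ lies in every dense ideal $J$. Given such a $J$, choose by density $b\in J_+$ with $\|a-b\|<\epsilon$ (for instance $b=j^{*}j$ for $j\in J$ near $a^{1/2}$); by the standard comparison lemma---if $a,b\ge0$ and $\|a-b\|<\epsilon$ then $(a-\epsilon)_+=d\,b\,d^{*}$ for some $d\in A$---we get $(a-\epsilon)_+\in J$. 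Since $(a-\epsilon)_+\to a$ as $\epsilon\to0$, $K(A)$ is dense; and since the algebraic ideal generated by $\{(a-\epsilon)_+:a\in A_+,\ \epsilon>0\}$ is a dense ideal contained in $K(A)$, it equals $K(A)$. Thus $K(A)$ is a dense ideal contained in every dense ideal, i.e.\ the minimal one.

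It remains to see that $K(A)$ is hereditary, and here I would identify $K(A)_+=\{\,x\in A_+: x\le\sum_{i=1}^{n}\lambda_i (c_i-\epsilon_i)_+ \text{ for some } n,\ \lambda_i>0,\ c_i\in A_+,\ \epsilon_i>0\,\}$. Granting this, if $0\le x\le y\in K(A)$ then $y$, hence also $x$, lies below some finite positive combination of truncations, so $x\in K(A)$. The inclusion ``$\subseteq$'' of the displayed description is the substantial point: a positive element of the algebraic ideal generated by finitely many truncations $q_i=(c_i-\epsilon_i)_+$ must be shown to be dominated by a scalar multiple of a finite positive combination of truncations. Using the local units $g_i(c_i)\le\tfrac{2}{\epsilon_i}(c_i-\tfrac{\epsilon_i}{2})_+$ of the $q_i$, this reduces to the assertion that the set of positive elements of $A$ possessing a local unit is closed under addition---equivalently, that finitely many positive elements each having a local unit admit a common local unit. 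This is the heart of Pedersen's construction and, I expect, the main obstacle: there is no C*-algebraic analogue of the pointwise maximum of two functions, so the common local unit must be produced by hand. Concretely, one first checks that if $0\le e\le1$, $ex=x$, and $0\le u\le1$ with $u\ge e$, then $ux=x$ (because $x(1-u)x\le x(1-e)x=0$), so it suffices to produce inside $A$ a single positive contraction dominating suitable plateau functions of the given local units simultaneously. Once this is available, density, minimality, and the passage from the description of $K(A)_+$ to heredity are all routine, completing the proof.
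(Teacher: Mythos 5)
The paper does not prove this statement at all: it is quoted verbatim as Theorem 5.6(.1) of Pedersen's book and used as a black box, so there is no in-paper argument to compare yours against. Judged on its own terms, the first half of your proposal is sound: defining $K(A)$ as the intersection of all dense two-sided ideals, showing $(a-\eps)_+$ lies in every dense ideal via $b=j^*j\in J_+$ close to $a$ and the comparison lemma $(a-\eps)_+=dbd^*$, and concluding density, minimality, and the identification of $K(A)$ with the algebraic ideal generated by the truncations is a correct (if anachronistic relative to Pedersen's original argument) route.

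The heredity part, however, contains a genuine gap, and you essentially say so yourself: the assertion that finitely many positive elements with local units admit a common local unit is announced as ``the heart of the construction'' and ``the main obstacle'' but never proved, and everything after it is declared routine. Worse, the reduction you propose does not isolate the true difficulty. To show that a positive element $z=\sum a_iq_ib_i$ of the ideal generated by truncations is dominated by a positive combination of truncations, one first bounds $z\le\tfrac12\sum(a_iq_ia_i^*+b_i^*q_ib_i)$, and the crux is then to show that $aqa^*$ itself lies in the cone $\{x: x\le\sum\lb_j(c_j-\eps_j)_+\}$. Writing $aqa^*=ww^*$ with $w=aq^{1/2}$, one knows $w^*w\le\|a\|^2q$, i.e.\ $w$ has a local unit \emph{on the right}, but what is needed is control of $ww^*$; this passage from $v^*v$ to $vv^*$ (equivalently, the invariance of the hereditary cone under $x\mapsto dxd^*$) is the non-trivial lemma in Pedersen's proof, and it is not implied by, nor obviously equivalent to, the common-local-unit statement you reduce to. Your auxiliary observation (that $e\le u\le 1$ and $ex=x$ force $ux=x$) is correct, but you give no construction of a positive contraction in $A$ dominating several non-commuting local units simultaneously, and such an element cannot be obtained by naive functional calculus of the sum. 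Until either the $v^*v\leftrightarrow vv^*$ transfer or the common dominating local unit is actually produced, heredity --- and hence the theorem --- is not established.
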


\begin{prop}\label{continuous_trace_c_a_proposition}\cite{pedersen:ca_aut}
Let $A$ be a $C^*$ - algebra with continuous trace Then
\begin{enumerate}
\item $A$ is of type $I_0$;
\item $\hat A$ is a locally compact Hausdorff space;
\item For each $t \in \hat A$ there is an abelian element $x \in A$ such that $\hat x \in K(\hat A)$ and $\hat x(t) = 1$.
\end{enumerate}
The last condition is sufficient for $A$ to have continuous trace.
\end{prop}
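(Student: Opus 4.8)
The plan is to reduce the whole proposition to one functional-calculus lemma and then run the standard structural arguments on top of it. Write $f_\delta(s):=\max(s-\delta,0)$ and $g_\delta(s):=\min(s,\delta)$, so $s=f_\delta(s)+g_\delta(s)$ for $s\ge0$. \emph{Key lemma: if $x\in A_+$ has continuous trace and $\delta>0$, then $f_\delta(x)$ and $g_\delta(x)$ have continuous trace, and $t\mapsto\|\pi_t(x)\|$ is continuous on $\hat A$} (here $\pi_t\in t$). Indeed $f_\delta(x),g_\delta(x)\in A_+$ (functional calculus with functions vanishing at $0$) and $x=f_\delta(x)+g_\delta(x)$, so $\widehat{f_\delta(x)}=\hat x-\widehat{g_\delta(x)}$ pointwise on $\hat A$; since $\hat x\in C^b(\hat A)$ and $\widehat{g_\delta(x)}$ is lower semicontinuous (every trace function is, as recalled before Definition~\ref{continuous_trace_c_a_defn}), the difference $\widehat{f_\delta(x)}$ is upper semicontinuous, hence continuous, and symmetrically for $\widehat{g_\delta(x)}$. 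Finally $\{t:\|\pi_t(x)\|\le c\}=\{t:\widehat{f_c(x)}(t)=0\}$ is closed for every $c>0$, so $t\mapsto\|\pi_t(x)\|$ is upper semicontinuous; it is always lower semicontinuous, hence continuous.

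For (1), I would first note that $A$ is liminary: for irreducible $\pi$ and continuous-trace $x\in A_+$ the operator $\pi(x)$ is trace-class, hence compact, and since such $x$ are dense in $A_+$ and $\mathcal{K}(H_\pi)$ is norm-closed, $\pi(A)=\mathcal{K}(H_\pi)$. Continuous trace passes to every quotient $A/J$ (lift a positive element, approximate it by a continuous-trace $y$, and note $\widehat{y+J}$ is $\hat y$ restricted to the closed set $\widehat{A/J}\subseteq\hat A$); since the abelian elements generate $A$ exactly when the closed ideal they generate is all of $A$ — otherwise one gets a nonzero quotient without abelian elements — it suffices to produce, at each $\pi_0\in\hat A$, an abelian $y\in A_+$ with $\hat y(\pi_0)=1$. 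To do this, use liminarity and density of continuous-trace elements to pick $x\in A_+$ of continuous trace with $\pi_0(x)$ within $\tfrac14$ of a rank-one projection, so $\pi_0(x)$ has one eigenvalue $>\tfrac34$ and all others $<\tfrac14$. By the key lemma the nonnegative continuous function $t\mapsto\widehat{f_{1/2}(x)}(t)-(\|\pi_t(x)\|-\tfrac12)_+$ — the sum of the eigenvalues of $\pi_t(x)$ other than the largest, counted above level $\tfrac12$ — vanishes at $\pi_0$, so on a neighbourhood $W$ of $\pi_0$ the operator $\pi_t(x)$ has a single eigenvalue above $\tfrac12$, whence $\pi_t(f_{1/2}(x))$ has rank $\le1$ for $t\in W$. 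Compressing $f_{1/2}(x)$ by an element of $A$ supported in $W$ and not annihilating the top eigendirection at $\pi_0$ (such an element exists because $W$ is open, hence $W=\widehat I$ for a closed ideal $I$ with $\pi_0|_I$ onto $\mathcal{K}(H_{\pi_0})$), and rescaling, produces the desired abelian $y$ by Proposition~\ref{abelian_element_proposition}. Hence $A$ is of type $I_0$.

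For (2) and (3): since $A$ is of type $I_0$, the open sets $U_y:=\{\pi\in\hat A:\pi(y)\ne0\}$, with $y$ abelian, cover $\hat A$, and each $U_y$ is canonically homeomorphic to the spectrum of the commutative algebra $\overline{yAy}\cong C_0(Y_y)$, so $\hat A$ is locally compact; it is also Hausdorff, because $\hat A$ is $T_0$ (liminarity makes $\pi\mapsto\ker\pi$ injective) and the continuous functions $\hat y$, $y$ abelian, separate points — for $\pi\ne\pi'$ with, say, $\pi\notin\overline{\{\pi'\}}$, the construction of (1) carried out inside the closed ideal cut out by the open set $\hat A\setminus\overline{\{\pi'\}}$ yields abelian $y$ with $\hat y(\pi)=1$, $\hat y(\pi')=0$, and a space whose points are separated by continuous functions is Hausdorff. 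For (3): given $t$, take the abelian $y$ from (1) with $\hat y(t)=1$; then $\hat y$ is continuous with $0\le\hat y\le1$, and on the chart $\overline{yAy}\cong C_0(Y_y)$ the set $\{\hat y\ge\tfrac12\}$ is compact, so replacing $y$ by $g(y)$ for $g\in C_0((0,1])$ with $g\equiv1$ near $1$ and $\mathrm{supp}\,g\subseteq[\tfrac12,1]$ gives an abelian element with $\widehat{g(y)}=g\circ\hat y\in C_c(\hat A)=K(\hat A)$ and $\widehat{g(y)}(t)=1$.

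For the converse, assume (3); then the abelian elements of (3) generate $A$ (no primitive ideal contains all of them, since the one with $\hat y(\pi)=1$ is not in $\ker\pi$), so $A$ is of type $I_0$ and in particular liminary. To see that continuous-trace elements are dense in $A_+$, fix $x\in A_+$ and $\varepsilon>0$, reduce (by density of the Pedersen ideal $K(A)$) to $x\in K(A)_+$, so that $\{t:\|\pi_t(x)\|\ge\varepsilon\}$ is compact, cover that compact set by finitely many $U_{y_i}$ with the $y_i$ abelian, $\widehat{y_i}\in K(\hat A)$ and $\sum_i\widehat{y_i}\ge1$ on it, and pass to $z:=\sum_i y_i^{1/2}xy_i^{1/2}$ (suitably normalised): then $\|x-z\|<\varepsilon$, and $\hat z$ is continuous because each $t\mapsto\mathrm{Tr}\,\pi_t(y_i^{1/2}xy_i^{1/2})$ is a continuous scalar function on $U_{y_i}$ (these compressions have $\pi_t$-rank $\le1$) and the sum is finite. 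I expect the real obstacle to be the abelian-element construction in step (1): converting ``$\pi_t(x)$ has one large eigenvalue on a neighbourhood of $\pi_0$'' into a genuine abelian element of $A$ requires an element spectrally concentrated on that neighbourhood and non-orthogonal to the top eigendirection at $\pi_0$, which forces one to use the local ideal structure of $A$ near $\pi_0$ and thereby interlocks with (2); keeping the logical order straight — liminarity, then abelian elements and type $I_0$, then the chart picture and Hausdorffness, then (3) and the converse — is the delicate bookkeeping.
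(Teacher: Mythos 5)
The paper itself offers no proof of Proposition~\ref{continuous_trace_c_a_proposition}: it is quoted with a bare citation to Pedersen's book, so the only comparison available is with the standard argument there. Your reconstruction follows that argument quite closely --- the key lemma on $f_\delta(x)$, $g_\delta(x)$ and the continuity of $t\mapsto\|\pi_t(x)\|$ is exactly Pedersen's preparatory lemma, and the overall architecture (liminarity, local abelian elements, charts $\overline{yAy}\cong C_0(Y_y)$, separation of points by the functions $\hat y$) is the right one.

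There is, however, one genuine logical gap, and it sits exactly where you predicted the difficulty would be. You argue that the continuous nonnegative function $t\mapsto\widehat{f_{1/2}(x)}(t)-\bigl(\|\pi_t(x)\|-\tfrac12\bigr)_+$ ``vanishes at $\pi_0$, so on a neighbourhood $W$ of $\pi_0$ the operator $\pi_t(x)$ has a single eigenvalue above $\tfrac12$.'' A continuous nonnegative function vanishing at a point need not vanish on any neighbourhood of that point ($t\mapsto|t|$ on $\mathbb{R}$), and here it genuinely may fail to: $\pi_t(x)$ can acquire a second eigenvalue $\tfrac12+\eta(t)$ with $\eta(t)\to 0$ as $t\to\pi_0$, in which case your function tends to $0$ without vanishing and $\pi_t(f_{1/2}(x))$ has rank $2$ at points arbitrarily close to $\pi_0$. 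The standard repair is a pigeonhole on the trace rather than on this ``excess'' function: first replace $x$ by a functional-calculus image $y$ with $\pi_0(y)$ a rank-one projection (possible because the top eigenvalue of the compact operator $\pi_0(x)$ is isolated and simple), so $\hat y(\pi_0)=1$; on the open set $V=\{t:\hat y(t)<\tfrac32\}$ no $\pi_t(y)$ can have two eigenvalues $\ge\tfrac34$, since their sum alone would already exceed $\tfrac32$, hence $\pi_t(f_{3/4}(y))$ has rank $\le 1$ for all $t\in V$ while $\pi_0(f_{3/4}(y))\ne 0$; the compression by an element of the ideal supported in $V$ then goes through as you describe. With that one step replaced, the rest of your argument (Hausdorffness via separating abelian elements, local compactness via the commutative hereditary subalgebras, part (3), and the converse by a partition-of-unity compression) is essentially the standard proof. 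The only other point worth flagging is that your density argument gives only $\pi(A)\subseteq\mathcal{K}(H_\pi)$; it must be supplemented by the classical fact that an irreducibly acting $C^*$-algebra containing one nonzero compact operator contains all of $\mathcal{K}(H_\pi)$.
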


\begin{rem}\label{ctr_is_ccr}
From \cite{dixmier_tr}, Proposition 10, II.9 it follows that a continuous trace
$C^*$-algebra is always a $CCR$-algebra, a $C^*$-algebra where for every irreducible
representation $\pi: A \rightarrow B(H)$ and for every element $x\in A$, $\pi (x)$ is a
compact operator, i.e. $\pi(A)=\mathcal{K}(H)$.

\end{rem}
\begin{lem}\label{free_action_ccr}
Let $A^G\rightarrow A$ be  a noncommutative covering projection such that $A$ is a $CCR$-algebra. Then $G$ acts freely on $\hat A $.
\end{lem}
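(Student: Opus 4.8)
The plan is to establish the pointwise contrapositive: if $g \in G$ fixes a point of $\hat A$, then $g$ is the identity automorphism; freeness of the $G$-action on $\hat A$ then follows at once. So suppose $t = [\rho] \in \hat A$ is fixed by $g$, where $\rho : A \to B(H)$ is an irreducible representation. By the description of the $G$-action on $\hat A$ in \ref{irreducible_corr}, $g \cdot t = t$ means that the representation $\rho_g$ given by $\rho_g(a) = \rho(ga)$ is unitarily equivalent to $\rho$; equivalently, $\rho$ is a $g$-invariant representation in the sense of the definition preceding Definition \ref{nc_fin_cov_pr_defn}. Fix a unitary $u \in U(H)$ with $u\,\rho(a)\,u^{*} = \rho(ga)$ for all $a \in A$.

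Now I would bring in the $CCR$ hypothesis. Since $\rho$ is irreducible and $A$ is $CCR$, we have $\rho(A) = \mathcal{K}(H)$, and therefore $M(\rho(A)) = B(H)$. The identity $u\,\rho(a)\,u^{*} = \rho(ga)$ shows that the inner automorphism $x \mapsto u\,x\,u^{*}$ of $B(H)$ carries $\rho(A)$ onto $\rho(A)$ (it sends $\rho(a)$ to $\rho(ga)$, and $a \mapsto ga$ is a bijection of $A$), and that its restriction to $\rho(A)$ is precisely the $*$-automorphism induced by $g$ on $\rho(A) = \mathcal{K}(H)$. Since $u \in B(H) = M(\rho(A))$, this induced automorphism is a generalized inner automorphism in the sense of Definition \ref{gen_in_def}.

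Thus I will have exhibited a $g$-invariant irreducible representation $\rho$ for which the induced automorphism is generalized inner. By condition (2) of Definition \ref{nc_fin_cov_pr_defn}, every non-trivial element of $G$ is strictly outer, and strict outerness of $g$ would forbid exactly this; hence $g$ cannot be non-trivial, i.e. $g = e$. As $t$ and $g$ were arbitrary, no non-identity element of $G$ fixes a point of $\hat A$, which is what it means for $G$ to act freely.

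The only delicate step is the transition in the second paragraph from ``$\rho_g \cong \rho$'' to ``the induced automorphism of $\rho(A)$ is generalized inner''. The essential inputs are: $\rho(A) = \mathcal{K}(H)$ for an irreducible representation of a $CCR$ algebra (the defining property recalled in Remark \ref{ctr_is_ccr}), the identification $M(\mathcal{K}(H)) = B(H)$, and the observation that the unitary implementing $\rho_g \cong \rho$ automatically lies in $M(\rho(A))$ and implements the $g$-induced automorphism (so that $\ker\rho$ is $g$-invariant and $g$ genuinely descends to $\rho(A)$). One should also record the convention that condition (2) is to be read as applying to the non-identity elements of $G$, since the identity automorphism is always generalized inner.
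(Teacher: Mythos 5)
Your proposal is correct and follows essentially the same route as the paper's proof: use irreducibility plus the $CCR$ hypothesis to get $\rho(A)=\mathcal{K}(H)$, observe that the unitary implementing $\rho_g\cong\rho$ lies in the multiplier algebra, and conclude that $g$ would be generalized inner with respect to $\rho$, contradicting strict outerness. The only (cosmetic) difference is that you work with $M(\rho(A))=M(\mathcal{K}(H))=B(H)$ directly, whereas the paper lifts the unitary to an element of $M(A)$ via the claim $\rho(M(A))=B(H)$; your variant is, if anything, the cleaner reading of the definition.
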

\begin{proof}

Suppose that $G$ does not act freely on $\hat A$.  Then there are $x \in \hat A$ and $g \in G$ such that $gt = t \ (t\in \hat A)$. By definition \ref{nc_fin_cov_pr_defn} $g$ should be strictly outer.  Let $\rho: A \rar B(H)$ be representative of $x$. Then $\rho_g$ is also representative of $x$. So $\rho$ is unitary equivalent to $\rho_g$, i. e. there is unitary $U \in U(H)$ such that $\rho_g(a) = U\rho(a)U^*$ ($\forall a \in A)$. According to \ref{ctr_is_ccr} $\rho(A) = \mathcal{K}(H)$, $\rho(M(A)) = B(H)$, $\rho(U(M(A))) = U(H)$. So it is $u\in M(A)$ such that $\rho(u)=U$ and we have $\rho_g(a) = \rho(u)\rho(a)\rho(u^*)$. It means that $g$ is inner with respect to $\rho$, so action of $g$ is not strictly outer.  This contradiction proves the lemma.
\end{proof}

\begin{lem}\label{continuous_trace_c_a_proposition_galois}\cite{pedersen:ca_aut}
Let $G$ be a finite group and  $f: A^G \rar A$ is a $G$ - covering projection. If $A^G$ is a continuous trace $C^*$ - algebra then $A$ is also a continuous trace $C^*$ - algebra. 
\end{lem}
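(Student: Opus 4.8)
The plan is to verify the sufficient condition for continuous trace isolated at the end of Proposition \ref{continuous_trace_c_a_proposition}: for each $t\in\widehat A$ one must produce an abelian element $x\in A$ with $\widehat x\in K(\widehat A)$ and $\widehat x(t)=1$. Such an $x$ will be obtained by viewing, through the inclusion $A^{G}\hookrightarrow A$, an abelian element of $A^{G}$ furnished by the hypothesis on $A^{G}$, and transporting its trace function along the map $\widehat f:\widehat A\to\widehat{A^{G}}$ of \ref{irreducible_corr}.

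First I would match up the representation theory of $A$ and of $A^{G}$. Let $\rho:A\to B(H)$ be irreducible and form $\rho_{\oplus}$ on $H^{n}$, $n=|G|$, as in \ref{irreducible_corr}. The $G$-action (\ref{g_act_irr_sum}) on $H^{n}$ is the regular permutation action on the $n$ summands, so its fixed subspace $K=(H^{n})^{G}$ is the diagonal copy of $H$, and by (\ref{a_act_irr_sum}) an element $a\in A^{G}$ acts on each summand by $\rho(a)$; hence the induced representation $\eta$ of $A^{G}$ on $K$ is unitarily equivalent to $\rho|_{A^{G}}$. Since $\eta$ is irreducible by \ref{irreducible_corr}, $\rho|_{A^{G}}$ is exactly the irreducible representation of $A^{G}$ sitting over $\widehat f(\rho)$. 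This yields two facts. (i) By Remark \ref{ctr_is_ccr} the algebra $A^{G}$ is $CCR$, so $\rho(A^{G})=\mathcal{K}(H)$; as $A$ is a finitely generated $A^{G}$-module (Definition \ref{nc_fin_cov_pr_defn}(1)), say $A=\sum_{i}A^{G}a_{i}$, we get $\rho(A)=\sum_{i}\rho(A^{G})\rho(a_{i})\subseteq\mathcal{K}(H)\,B(H)=\mathcal{K}(H)$, so $\rho(A)=\mathcal{K}(H)$ and $A$ is $CCR$; Lemma \ref{free_action_ccr} then makes $G$ act freely on $\widehat A$, and by (\ref{spectrum_galois_quot}) the map $\widehat f$ is the quotient of $\widehat A$ by this free finite action onto the locally compact Hausdorff space $\widehat{A^{G}}$, hence a finite-sheeted covering projection; in particular $\widehat f$ is closed with finite fibres and so proper, and $\widehat A$ is locally compact Hausdorff. (ii) If $y\in A^{G}$ is abelian in $A^{G}$, then for every irreducible $\rho$ of $A$ the operator $\rho(y)=(\rho|_{A^{G}})(y)$ has rank at most $1$ by Proposition \ref{abelian_element_proposition}, so $y$ is abelian in $A$; moreover the trace function of $y$ computed in $A$ is $\widehat y\circ\widehat f$ on $\widehat A$.

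Now fix $t\in\widehat A$ and set $s=\widehat f(t)$. Applying Proposition \ref{continuous_trace_c_a_proposition}(3) to $A^{G}$ (which has continuous trace) gives an abelian $y\in A^{G}$ with $\widehat y\in K(\widehat{A^{G}})$ and $\widehat y(s)=1$. By (ii) the element $y$ is abelian in $A$, its trace function on $\widehat A$ equals $\widehat y\circ\widehat f$, which is continuous, takes the value $1$ at $t$, and satisfies $\{|\widehat y\circ\widehat f|\ge\varepsilon\}=\widehat f^{-1}(\{|\widehat y|\ge\varepsilon\})$, a compact set because $\widehat f$ is proper; hence this function lies in $K(\widehat A)$. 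So the sufficient condition of Proposition \ref{continuous_trace_c_a_proposition} holds and $A$ has continuous trace.

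The hard part is the middle paragraph. The identification $\rho|_{A^{G}}\cong\eta_{\widehat f(\rho)}$ is what lets abelian-ness and trace functions descend, and it should be extracted carefully from \ref{irreducible_corr}. The more delicate point is that $\widehat f$ is a proper, finite-sheeted covering projection: this uses freeness of the $G$-action (Lemma \ref{free_action_ccr}) together with the fact that $\widehat{A^{G}}\cong\widehat A/G$ is locally compact Hausdorff, and for local triviality one must also invoke that $A$ is finitely generated \emph{projective} over $A^{G}$. Once properness of $\widehat f$ is available, the support estimate for $\widehat y\circ\widehat f$, and hence the whole conclusion, is routine.
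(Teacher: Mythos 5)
Your proof is correct and follows essentially the same route as the paper's: identify $\rho|_{A^G}$ with the irreducible representation $\eta$ lying over $\hat f(\rho)$, conclude that abelian elements of $A^G$ remain abelian in $A$, and verify the sufficient condition (3) of Proposition \ref{continuous_trace_c_a_proposition} at each $t\in\hat A$ using an abelian element of $A^G$ over $s=\hat f(t)$. The only difference is that you explicitly justify $\hat y\circ\hat f\in K(\hat A)$ via properness of $\hat f$, a point the paper's proof asserts without argument.
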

\begin{proof}
From \ref{irreducible_corr} it follows that for any irreducible representation $\rho : A \rar \mathcal{B}(H)$ there is a irreducible representation $\eta: A^G \rar B(H)$ such that
\begin{equation}\label{irr_rho_eta}
\rho|_{A^G}=\eta
\end{equation}
 Let $x\in A^G$ be an abelian element of $A^G$. From \ref{abelian_element_proposition} it follows that $\mathrm{dim} \ \eta(x) \le 1$ for any irreducible representation $\eta: A^G \rar \mathcal{B}(H)$. From (\ref{irr_rho_eta}) it follows that $\mathrm{dim} \ \rho(x) \le 1$ for any irreducible representation $\rho: A \rar \mathcal{B}(H)$. So any abelian element of $A^G$ is also an abelian element of $A$.
 Let $t \in \hat A$ and $s =\hat f(t) \in \hat A^G$ where $\hat f$ is defined by (\ref{spectrum_galois_map}). From \ref{abelian_element_proposition} it follows that there is an abelian element $x\in A^G$ such that $\hat x \in K(\hat A^G)$ and $\hat x(s)=1$. However $x$ is a abelian element of $A$, $\hat x \in K(A)$ and $\hat x(t)= \hat x(s)=1$. From \ref{continuous_trace_c_a_proposition} it follows that $A$ is a continuous trace $C^*$ - algebra.
 
\end{proof}

\begin{prop}\label{prop_act_grp}\cite{bourbaki_sp:gt}
If a topological group $G$ acts properly on a topological space then orbit space $X/G$ is Hausdorff. If also $G$ is Hausdorff, then $X$ is Hausdorff.
\end{prop}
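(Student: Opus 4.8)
The plan is to reduce everything to the Bourbaki characterisation of properness and two standard facts: a proper map is closed, and the restriction of a proper map to a closed subspace is again proper. Write $\theta : G \times X \rar X \times X$ for the map $(g,x)\mapsto (gx,x)$. Saying that $G$ acts properly on $X$ means precisely that $\theta$ is a proper map (Bourbaki, \emph{General Topology}, III.4), so in particular $\theta$ is continuous and closed.

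First I would show that $X/G$ is Hausdorff. The image $R:=\theta(G\times X)\subset X\times X$ is exactly the graph of the orbit equivalence relation, i.e. $(x,y)\in R$ iff $x$ and $y$ lie in one $G$-orbit. Since $G\times X$ is closed in itself and $\theta$ is a closed map, $R$ is closed in $X\times X$. Next, the canonical surjection $p:X\rar X/G$ is open: for open $U\subset X$ one has $p^{-1}(p(U))=\bigcup_{g\in G} gU$, which is open because each $g$ acts on $X$ as a homeomorphism, hence $p(U)$ is open. Now I invoke the elementary criterion that a quotient of a space by an \emph{open} equivalence relation is Hausdorff if and only if the graph of that relation is closed in the square (one implication is immediate from continuity of $p\times p$ and $R=(p\times p)^{-1}(\Delta_{X/G})$; the other follows by separating points of $X/G$ using $p$-images of a basic rectangle disjoint from $R$). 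Applied to $R$, this yields that $X/G$ is Hausdorff.

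Then I would deduce that $X$ itself is Hausdorff when $G$ is Hausdorff. If $G$ is Hausdorff then $\{e\}$ is closed in $G$, so $\{e\}\times X$ is closed in $G\times X$. Hence the restriction $\theta|_{\{e\}\times X}:\{e\}\times X\rar X\times X$ is proper, therefore closed, and its image is the diagonal $\Delta_X\subset X\times X$. Thus $\Delta_X$ is closed, which is equivalent to $X$ being Hausdorff.

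The ingredients used here — proper maps are closed, restriction of a proper map to a closed set is proper, and the open-equivalence-relation Hausdorffness criterion — are all in Bourbaki and I would simply cite them rather than reprove them. The only point that deserves genuine care is to fix one consistent meaning of ``acts properly'' throughout, namely properness of the map $\theta$ in the Bourbaki sense; for non-Hausdorff $G$ or $X$ this is slightly stronger than some alternative formulations in the literature, and getting this alignment right is really the whole content. Once it is fixed, the two assertions of the proposition are formal consequences of the closedness of $\theta$ and of its restriction to $\{e\}\times X$.
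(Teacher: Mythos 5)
The paper gives no proof of this proposition at all --- it is stated as a bare citation to Bourbaki --- so there is nothing internal to compare against. Your argument is correct and is in fact the standard Bourbaki proof (General Topology, III.4.2): closedness of $\theta(G\times X)$ plus openness of the quotient map gives Hausdorffness of $X/G$, and closedness of $\theta(\{e\}\times X)=\Delta_X$ gives Hausdorffness of $X$ when $G$ is Hausdorff; your care in fixing the Bourbaki meaning of ``acts properly'' is exactly the right point to insist on.
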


\begin{thm}\label{finite_ctra_cov_pr_free ation}
 Let  $f:A^G \rar A$ be a noncommutative finite covering projection and $A^G$ is a continuous trace algebra. Then is a  $\hat A \rightarrow \hat A /G$ is a (topological) covering projection.
\end{thm}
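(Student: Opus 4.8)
\emph{Sketch of an approach.} The plan is to reduce the statement to the classical fact that a free action of a finite group on a Hausdorff space makes the orbit map a covering projection, so the work is mostly in assembling the already-proved lemmas and then running the standard separation argument.

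\emph{Step 1 (transporting the hypotheses to $A$).} Since $A^G$ is a continuous trace algebra and $f$ is a $G$-covering projection, Lemma~\ref{continuous_trace_c_a_proposition_galois} gives that $A$ is also a continuous trace algebra. By Remark~\ref{ctr_is_ccr} such an algebra is $CCR$, so Lemma~\ref{free_action_ccr} applies and $G$ acts freely on $\hat A$. Moreover Proposition~\ref{continuous_trace_c_a_proposition}(2) shows $\hat A$ is a locally compact Hausdorff space, and by (\ref{spectrum_galois_quot}) the orbit space $\hat A/G$ is homeomorphic to $\widehat{A^G}$, the orbit map being the map $\hat f$ of (\ref{spectrum_galois_map}).

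\emph{Step 2 (producing evenly covered neighborhoods).} Fix $t\in\hat A$. For each $g\in G\setminus\{e\}$ freeness gives $gt\neq t$, so the Hausdorff property of $\hat A$ yields disjoint open sets $U_g\ni t$ and $W_g\ni gt$; set $V=\bigcap_{g\neq e}\bigl(U_g\cap g^{-1}W_g\bigr)$. As a finite intersection of open neighborhoods of $t$ this is an open neighborhood of $t$, and for every $g\neq e$ one has $V\subseteq U_g$ and $gV\subseteq W_g$, hence $gV\cap V=\emptyset$. Consequently the translates $\{gV : g\in G\}$ are pairwise disjoint, since $g_1V\cap g_2V\neq\emptyset$ would give $(g_1^{-1}g_2)V\cap V\neq\emptyset$ with $g_1^{-1}g_2\neq e$.

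\emph{Step 3 (conclusion).} The quotient map $q:\hat A\rar\hat A/G$ is open, because $q^{-1}(q(O))=\bigcup_{g\in G}gO$ is open for every open $O\subseteq\hat A$. Hence $q(V)$ is open in $\hat A/G$, the restriction $q|_V:V\rar q(V)$ is a continuous open bijection (injective because no two points of $V$ share a $G$-orbit), i.e. a homeomorphism, and $q^{-1}(q(V))=\bigsqcup_{g\in G}gV$ is a disjoint union of open sets each carried homeomorphically onto $q(V)$ by $q$. Since $t$ was arbitrary, the sets $q(V)$ form a family of evenly covered open sets covering $\hat A/G$, so $\hat A\rar\hat A/G$ is a covering projection; that the base is Hausdorff follows from Proposition~\ref{prop_act_grp} applied to the (automatically proper) action of the finite group $G$ on the Hausdorff space $\hat A$. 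The only genuinely delicate point is Step~2, the simultaneous separation producing one $V$ with $gV\cap V=\emptyset$ for all $g\neq e$; this is exactly where the three inputs are used — Hausdorffness of $\hat A$ (continuous trace of $A$, via Lemma~\ref{continuous_trace_c_a_proposition_galois}), freeness of the action (the $CCR$ property, via Lemma~\ref{free_action_ccr}), and finiteness of $G$. A minor check is that the identification $\hat A/G\approx\widehat{A^G}$ in (\ref{spectrum_galois_quot}) respects the quotient topology, so that the covering projection $\hat A\to\hat A/G$ and its reformulation over $\widehat{A^G}$ coincide.
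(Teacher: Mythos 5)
Your proposal is correct and follows essentially the same route as the paper: reduce via Lemma~\ref{continuous_trace_c_a_proposition_galois}, Proposition~\ref{continuous_trace_c_a_proposition}, Lemma~\ref{free_action_ccr} and (\ref{spectrum_galois_quot}) to the classical fact that a free action of a finite group on a Hausdorff space makes the orbit map a covering projection. The only difference is that the paper simply cites Spanier for that classical fact, whereas your Steps~2--3 prove it directly by the standard simultaneous-separation argument.
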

\begin{proof}

From lemma \ref{continuous_trace_c_a_proposition_galois} it follows that $A$ is a continuous trace algebra. From  \ref{continuous_trace_c_a_proposition} it follows that a space $\hat A$ is Hausdorff. From \ref{free_action_ccr} it follows that $G$ acts freely on $\hat A $. From (\ref{spectrum_galois_quot}) it follows that $\widehat{A^G} \approx \hat A / G$.  It is known \cite{spanier:at} that if a finite group $G$ acts freely on Hausdorff space $X$ then $X \rightarrow X/G$ is a covering projection.
\end{proof}

\begin{rem} From  theorem \ref{finite_ctra_cov_pr_free ation} it follows that finite covering projections of commutative algebras are just covering projections of their character spaces.
If $A^G$ is a commutative $C^*$ - algebra then $\mathrm{dim} \ \pi(A^G)=1$ for all irreducible $\pi: A \rar \mathcal{B}(H)$. If $f: A^G \rar A$ is noncommutative $G$ covering projection and $A^G$ is commutative then $A^G$ is continuous trace algebra $\Omega(A^G) \approx \hat A^G$. From \ref{continuous_trace_c_a_proposition_galois} it follows that $A$ is also a continuous trace $C^*$ - algebra. If $\rho: A \rar \mathcal{B}(H)$ then $\rho (A)=\mathcal{K}(H)$. Let us recall construction from \ref{irreducible_corr}.
Let us enumerate elements of $G$ by integers, i. e. $g_1, ..., g_n \in G , \ n = |G|$ and define action of $\sigma: G \times \{i, ..., n\} \rar \{i, ..., n\}$ such that $\sigma(g, i) = j \Leftrightarrow  g_j = gg_i$
Let $\rho_{\oplus} = \oplus_{g \in G} \rho_g: A\rar \mathcal{B} (H^n)$ be such that
\begin{equation}\label{a_act_irr_sum1}
\rho_{\oplus} (a)(h_1, ..., h_n)= (\rho(g_1a)h_1, ...,  (\rho(g_na)h_n).
\end{equation}
Let us define such linear action of $G$ on $H^n$ that
\begin{equation}\label{g_act_irr_sum1}
g (h_1, ..., h_n)  = (h_{\sigma(g^{-1}, 1)}, ...,h_{\sigma(g^{-1}, n)}).
\end{equation}
From (\ref{a_act_irr_sum}), (\ref{g_act_irr_sum1}) it follows that 
\begin{equation}\nonumber
g(ah)=(ga)(gh); \ \forall a\in A, \ \forall g\in G, \ \forall h\in H^n,
\end{equation}
i.e. $H^n \in _A\mathcal{M}^G$. Representation $\rho_{\oplus}$ defines representation $\eta: A^G \rar \mathcal{B}(K)$. $K = \left(H^n\right)^G$. From \ref{irreducible_corr} $\eta$ is irreducible representation and since $A^G$ is commutative it follows that $\mathrm{dim} \ K=1$. From (\ref{g_act_irr_sum1}) it follows that $\mathrm{dim} \ H = 1$. Thus the dimension of any irreducible representation of $A$ equals to 1. It means that any irreducible representation is commutative. From this fact it follows that $A$ is a commutative $C^*$ - algebra $\hat A = \Omega(A)$ and $\Omega(f): \Omega(A) \rar \Omega(A^G)$ is a (topological) covering projection.

\end{rem}

\subsection{Covering projections of noncommutative torus}\label{cov_pr_nc_torus}
\begin{empt}
A noncommutative torus \cite{varilly:noncom} $A_{\theta}$ is $C^*$-norm completion of algebra generated by two unitary elements $u, v$ which satisfy following conditions
\begin{equation}\nonumber
uu^*=u^*u=vv^*=v^*v=1;\\
\end{equation}
\begin{equation}\nonumber
uv=e^{2\pi i \theta}vu,
\end{equation}
where $\theta \in \Rb$.
If $\theta = 0$ then $A_{\theta}=A_0$ is commutative algebra of continuous functions on commutative torus $C(S^1\times S^1)$.
There is a trace $\tau_0$ on  $A_{\theta}$ such that $\tau_0 (\sum_{-\infty < i < \infty, -\infty < j<\infty}a_{ij}u^iv^j) = a_{00}$. $C^*$ - norm of $A_{\theta}$  is defined by following way $\|a\|=\sqrt{\tau_0 (a^*a)}$. Let us consider * - homomorphism  $f:A_{\theta} \rightarrow A_{\theta'}$, where $A_{\theta'}$ is generated by unitary elements $u'$ and $v'$. Homomorphism $f$ is defined by following way:
\begin{equation}\nonumber
u \mapsto u'^m;
\end{equation}
\begin{equation}\nonumber
v \mapsto v'^n;
\end{equation}
It is clear  that
\begin{equation}\label{theta_k}
\theta' = \frac{\theta + k}{mn}; \ (k = 0,..., mn - 1).
\end{equation}
\end{empt}
\begin{lem}
Above $*$-homomorphism $A_{\theta}\rightarrow A_{\theta'}$ is a noncommutative covering projection.
\end{lem}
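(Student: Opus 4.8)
The plan is to verify the three conditions of Definition \ref{nc_fin_cov_pr_defn} for the $*$-homomorphism $f : A_{\theta} \rar A_{\theta'}$, where $\theta' = (\theta + k)/(mn)$. First I would identify the covering transformation group $G$. Since $u = u'^m$ and $v = v'^n$, a candidate automorphism group is $G = \Zb_m \times \Zb_n$ acting on $A_{\theta'}$ by $u' \mapsto \zeta_m u'$, $v' \mapsto \zeta_n v'$ where $\zeta_m = e^{2\pi i/m}$, $\zeta_n = e^{2\pi i/n}$; one checks these are well-defined $*$-automorphisms (they preserve the commutation relation $u'v' = e^{2\pi i \theta'} v'u'$ since the scalars cancel) and that the fixed-point algebra $A_{\theta'}^{G}$ is exactly the subalgebra generated by $u'^m$ and $v'^n$, which is isomorphic to $A_{\theta}$ via $f$. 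The last identification uses the trace: $\tau_0(a^*a)$ and the explicit description of $A_{\theta'}$ as a completion of Laurent-type series $\sum a_{ij} u'^i v'^j$ show that the $G$-invariants are precisely the series supported on $i \in m\Zb$, $j \in n\Zb$, and that the induced trace matches $\tau_0$ on $A_\theta$.

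Next, for condition (1), that $A_{\theta'}$ is a finitely generated equivariant projective left and right $A_{\theta}$-Hilbert module: the elements $u'^i v'^j$ with $0 \le i < m$, $0 \le j < n$ form a basis, so $A_{\theta'}$ is free of rank $mn$ over $A_\theta$ on both sides, with the obvious $A_\theta$-valued inner product coming from the conditional expectation $E = \frac{1}{mn}\sum_{g \in G} g$; equivariance is immediate from the definition of the $G$-action. For condition (3), I would invoke Theorem \ref{hoph_galois_def_thm}: since $A_{\theta'}$ is free, hence faithfully flat, over $A_\theta$, it suffices to show $\mathrm{can}: A_{\theta'} \otimes_{A_\theta} A_{\theta'} \rar \mathrm{Map}(G, A_{\theta'})$ is an isomorphism; this is a finite-dimensional-over-$A_\theta$ linear check on the basis $u'^i v'^j$, where $\mathrm{can}$ sends $u'^a v'^b \otimes u'^c v'^d \mapsto (g \mapsto u'^a v'^b\, g(u'^c v'^d))$, and the invertibility reduces to the nonvanishing of a Vandermonde-type determinant built from the roots of unity $\zeta_m, \zeta_n$.

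The main obstacle is condition (2): showing every non-identity $g \in G$ is strictly outer on $A_{\theta'}$. The plan is to use the trace. Note that $A_{\theta'}$ carries the faithful trace $\tau_0$, which is $G$-invariant, so the GNS representation $\rho = \rho_{\tau_0}$ is $G$-invariant; moreover every $g \in G$ is spatially implemented on the GNS space by the unitary $U_g(u'^iv'^j) = \zeta_m^{i}\zeta_n^{j}\, u'^iv'^j$ (suitably scaled). If $g$ were generalized inner with respect to some $\alpha$-invariant representation, one would get, after passing to the weak closure, a unitary $w$ in the von Neumann algebra $M$ generated by $\rho(A_{\theta'})$ with $g(a) = waw^*$ for all $a$; then $w^*U_g$ would be a unitary in the commutant. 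But for $\theta'$ irrational $M$ is a factor (the hyperfinite $\mathrm{II}_1$ factor), so the commutant is trivial and $U_g \in M$ would force $g$ to be inner in $M$, which a direct computation with $\tau_0$ rules out since $g$ has no eigenvector of trace-norm fixed under the relevant coboundary condition — equivalently, $\tau_0(U_g u'^a v'^b U_g^* (u'^a v'^b)^*) = \zeta_m^{a}\zeta_n^{b} \ne 1$ for suitable $a,b$, so $U_g$ cannot be inner. For the rational case one argues similarly after restricting to ideals, or observes directly that no $G$-invariant ideal is generalized-inner-acted-upon because the spectral computation is uniform. I expect this outerness argument, and in particular handling the rational-$\theta'$ degenerate cases cleanly, to be the delicate part; everything else is bookkeeping with the $u'^i v'^j$ basis.
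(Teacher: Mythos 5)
Your overall strategy is the same as the paper's: verify the three conditions of Definition \ref{nc_fin_cov_pr_defn} directly for the group $G=\Zb_m\times\Zb_n$ acting by \eqref{ntorus_gen_1}, \eqref{ntorus_gen_2}. For condition (1) the two arguments coincide (the monomials $u'^iv'^j$, $0\le i<m$, $0\le j<n$, exhibit $A_{\theta'}$ as a free $A_\theta$-module of rank $mn$). For condition (3) the substance is also the same, though your packaging is cleaner: the paper proves surjectivity of $\mathrm{can}$ by importing partition-of-unity--type elements $x_i\in C(u')$, $y_j\in C(v')$ from the two circle coverings and proves injectivity by observing that the basis elements $1\otimes u'^rv'^s$ map to functions transforming under distinct characters of $G$; your reduction to the invertibility of the $mn\times mn$ character (DFT/Vandermonde) matrix, after factoring out the invertible monomials, gives bijectivity in one step and is equivalent.

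The genuine divergence is condition (2). The paper's argument there is very thin: it only computes the inner automorphisms implemented by the monomials $u'^p$ and $v'^q$ and observes that these do not coincide with \eqref{ntorus_gen_1}, \eqref{ntorus_gen_2}; this does not rule out conjugation by a general unitary of $A_{\theta'}$, let alone address arbitrary $\alpha$-invariant representations as the definition of strict outerness requires. Your trace-GNS/factor argument is the standard complete route and buys an actual proof of outerness for irrational $\theta'$. Two caveats on your version: (i) the weak closure of $\rho(A_{\theta'})$ in an \emph{arbitrary} invariant representation need not be a factor, so you should first reduce to innerness in $A_{\theta'}$ itself --- since $A_{\theta'}$ is unital, generalized inner means conjugation by a unitary of $M(A_{\theta'})=A_{\theta'}$, and for irrational $\theta'$ the algebra is simple so every representation is faithful; after that reduction the Fourier-coefficient computation in the trace GNS space (showing $wu'w^*=\zeta_m u'$, $wv'w^*=v'$ forces $w=0$) closes the case. (ii) The rational-$\theta'$ case, which the formula \eqref{theta_k} does not exclude and which the paper also glosses over, is genuinely different ($A_{\theta'}$ is then a continuous trace algebra and not simple) and remains an acknowledged gap in both your sketch and the paper's proof.
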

\begin{proof}

We need check conditions of definition \ref{nc_fin_cov_pr_defn}. $A_{\theta'}$ is a free $A_{\theta}$ module generated by monomials $u'^iv'^j$ ($i = 0,..., m-1; \ j = 0,..., n-1$), so it is projective finitely generated $A_{\theta}$-module. Commutative $C^*$- subalgebras $C(u') \subset A_{\theta'}$ and  $C(v') \subset A_{\theta'}$ generated by $u'$ and $v'$ respectively are isomorphic to algebra $C(S^1)$, where $S^1$ is one dimensional circle. There are induced by $f$ *-homomorphisms $C(S^1) = C(u) \rightarrow C(u') = C(S^1)$ ,
  $C(S^1) = C(v) \rightarrow C(v') = C(S^1)$. These *-homomorphisms induces $m$ and $n$ listed covering projections respectively. Covering groups of these covering projections are $G_1\approx\Zb_m$ and $G_2\approx\Zb_n$ respectively. Generators of these groups are presented below:
 \begin{equation}\label{ntorus_gen_1}
 u' \mapsto e^{\frac{2\pi i}{m}} u';
 \end{equation}
 \begin{equation}\label{ntorus_gen_2}
 v' \mapsto e^{\frac{2\pi i}{n}} v'.
 \end{equation}
 Equations (\ref{ntorus_gen_1}), (\ref{ntorus_gen_2}) define action of $G = \mathbb{Z}_m \times \mathbb{Z}_n$ on $A_{\theta'}$ and $A_{\theta}=A_{\theta'}^G$. Inner automorphisms of $A_{\theta'}$ are given by
 \begin{equation}\nonumber
 v' \mapsto u'^pv'u'^{*p} = e^{\frac{2\pi i p \theta}{mn}}v'.
 \end{equation} 
  \begin{equation}\nonumber
  u' \mapsto v'^qu'v'^{*q} = e^{\frac{2\pi i q \theta}{mn}}u'.
  \end{equation} 
  These inner automorphisms do not coincide with automorphisms given by (\ref{ntorus_gen_1}), (\ref{ntorus_gen_2}).
  Let us show that $\mathrm{can}: A_{\theta'} \otimes_{A_{\theta}}  A_{\theta'} \rar \mathrm{Map}(G, A_{\theta'})$ is an isomorphism in $_{A_{\theta}}\mathcal{M}^G$ category. This fact follows from the set theoretic bijectivity of the $\mathrm{can}$.
 Homomorphisms of commutative algebras $C(u)\rightarrow C(u')$, $C(v)\rightarrow C(v')$ correspond to covering projection, it follows that there are elements $x_i \in C(u')$ ($i=1,..., r$), $y_j\in C(v')$ ($j=1,..., s$) such that
 \begin{equation}\label{tor_eqn_1}
 \sum_{1 \le i \le r}{x^{2}_i} = 1_{C(u')};
 \end{equation}
 \begin{equation}\label{tor_eqn_2}
 \sum_{1 \le i \le r}{x_i(g_1 x_i)} = 0; g_1 \in G_1;
 \end{equation}
 \begin{equation}\label{tor_eqn_3}
 \sum_{1 \le j \le s}{y^{2}_i} = 1_{C(v')};
 \end{equation}
 \begin{equation}\label{tor_eqn_4}
 \sum_{1 \le j \le s}{y_i(g_2 y_i)} = 0; g_2 \in G_2,
 \end{equation}
 where $g_1$ and $g_2$ are nontrivial elements of $\mathbb{Z}_m$ and $\mathbb{Z}_n$.
 
 Let $a_k, b_k \in A_{\theta}$ be such that

 \begin{equation}\nonumber
 a_k=y_j x_i,
 \end{equation}
 \begin{equation}\nonumber
 b_k= x_i y_j,
 \end{equation}
 
 where $k=1, ..., rs$.
 
From (\ref{tor_eqn_1})- (\ref{tor_eqn_4}) it follows that
 \begin{equation}\nonumber
 \sum_{1 \le k \le rs}{a_k b_k} = 1_{A_{\theta '}};
 \end{equation}
 \begin{equation}\nonumber
 \sum_{1 \le k \le rs}{a_k(gb_k}) = 0,
 \end{equation}
where $g\in G=\mathbb{Z}_m \times \mathbb{Z}_n$ is a nontrivial element. If $\varphi \in \mathrm{Map}(G, A_{\theta})$ is such that $g_i \mapsto c_i$ ($i = 1, ..., mn$) then
\begin{equation}
\varphi = \mathrm{can}\left(\sum_{i=1}^{mn} \sum_{k=1}^{rs}a_k \otimes g_i^{-1}b_kc_i\right).
\end{equation}
So $\mathrm{can}$ is a surjective map. Let us show that $\mathrm{can}$ is injective. $A_{\theta'}$  is a free left $A_{\theta}$ module, because any element $a \in A_{\theta'}$ has following unique representation
\begin{equation}\label{free_mod_torus_eqn}
a = \sum_{r=0, s=0}^{m-1, \ n-1} a_{rs}u'^rv'^s \ (a_{rs} \in A_{\theta}).
\end{equation}
From (\ref{free_mod_torus_eqn}) it follows that any element $x \in A_{\theta'} \otimes_{A_{\theta}} A_{\theta'}$ has following unique representation
\begin{equation}\label{free_mod_torus_ten}
x = \sum_{r=0, s=0}^{m-1, \ n-1} a_{rs} \otimes u'^rv'^s \ (a_{rs} \in A_{\theta'}).
\end{equation}
Let us prove that $\mathrm{can}$ maps above sum of linearly independent elements of $A_{\theta'} \otimes_{A_{\theta}} A_{\theta'}$ to sum of linearly independent elements of $\mathrm{Map}(\mathbb{Z}_m \times \mathbb{Z}_n, A_{\theta'})$. Really if 
\begin{equation}\label{free_mod_torus_rep}
\varphi = \mathrm{can}(a \otimes u'^r v'^s)
\end{equation}
and $(p, q)\in \mathbb{Z}_m \times \mathbb{Z}_n$ then
\begin{equation}\label{free_mod_torus_phi}
\varphi((p,q)) = \varphi((0,0))e^{\frac{2\pi i pr}{m}}e^{\frac{2\pi i qs}{n}}.
\end{equation}
i.e. linearly independent elements of (\ref{free_mod_torus_ten}) correspond to different representations of $G = \mathbb{Z}_m \times \mathbb{Z}_n$, but different representations are linearly independent. So $\mathrm{can}$ is injective.

\end{proof}

\begin{rem}\label{rem_torus_morita}
Let  $\theta \in \mathbb{R}$ be irrational number, $m,n \in \mathbb{N}$, $mn>1$, $\theta' = \theta/mn$, $\theta'' = (\theta + k)/mn$ ($k \ne 0 \ \mathrm{mod} \ mn$). Let $u, v\in A_{\theta}$, $u', v'\in A_{\theta'}$, $u'', v''\in A_{\theta''}$ be unitary  generators, $f': A_{\theta} \rar A_{\theta'}$ (resp.  $f'': A_{\theta} \rar A_{\theta''}$) be * - homomorphism $u \mapsto u'^m$, $v \mapsto v'^n$ (resp. $u \mapsto u''^m$, $v \mapsto v''^n$). We have $A_{\theta'} \not\approx A_{\theta''}$. So this noncommutative covering projections are not isomorphic. However these covering projections can be regarded as equivalent because they are Motita equivalent. Let $U, V \in \mathbb{M}_{N=mn}(\mathbb{C})$ be unitary matrices such that
\begin{equation}\nonumber
UV = e^{\frac{2\pi i k }{ nm}}VU.
\end{equation}
There is following $G$ equivariant isomorphism $A_{\theta'} \otimes \mathbb{M}_N(\mathbb{C}) \approx A_{\theta''} \otimes \mathbb{M}_N(\mathbb{C})$
\begin{equation}\nonumber
u' \otimes 1 \rar u'' \otimes U; \  v' \otimes 1 \rar v'' \otimes V.
\end{equation}
This isomorphism is also $A_{\theta} - A_{\theta}$ bimodule isomorphism. From $\mathcal{K} \otimes \mathbb{M}_N(\mathbb{C}) \approx \mathcal{K}$ it follows that there exist isomorphism $A_{\theta'} \otimes \mathcal{K} \approx A_{\theta''} \otimes \mathcal{K}$ and there is following commutative diagram
\[
\begin{diagram}\label{group_dia_comm}
    \node{A_{\theta'} \otimes \mathcal{K}} \arrow[2]{e,t}{\approx} \node[2]{A_{\theta''} \otimes \mathcal{K}}  \\
   \node[2]{A_{\theta} \otimes \mathcal{K}}\arrow{nw}\arrow{ne}.
\end{diagram}
\]
I find that good theory of noncommutative covering projections should be invariant with respect to Morita equivalence. This theory can replace $C^*$-algebras with their stabilizations (recall that the stabilization of a $C^*$ algebra $A$ is a $C^*$-algebra $A\otimes \mathcal{K}$).
\end{rem}

\section{Covering projections and $K$-homology}
\subsection{Extensions of $C^*$-algebras generated by unitary elements}
\begin{defn}\label{gen_by_v_extension}
Let $A$ be a $C^*$-algebra, $A\rightarrow B(H)$ is a faithful representation, $u \in U(A^+)$, $v \in U(B(H))$,  is such that $v^n=u$ and $v^i \notin U(A^+)$, ($i=1,..., n-1$). A {\it generated by $v$ extension} is a minimal subalgebra of $B(H)$ which contains following operators: 
\begin{enumerate}
\item $v^i a; \ (a \in A, \ i=0, ..., n-1)$
\item $a v^i$.
\end{enumerate}
Denote by $A\{v\}$ a generated by $v$ extension.
\end{defn}
\begin{rem}
Sometimes a $*$-homomorphism $A \rightarrow A\{v\}$ is a noncommutative covering projection but it is not always true. If the homomorphism is a covering projection then there is a relationship between the covering projection and $K$ - homology.
\end{rem}
\begin{lem} Let $A$ be a $C^*$-algebra, $A\rightarrow B(H)$ is a faithful representation,  $u\in U(A^+)$ is an unitary element such that $\mathrm{sp}(u)=\mathbb{C}^*=\{z \in \mathbb{C} \ | \ |z| = 1 \}$, $\xi, \eta \in B_{\infty}(\mathrm{sp}(u))$ are Borel measured functions such that $\xi(z)^n = \eta(z)^n = z$ ($\forall z \in\mathrm{sp}(u)$). Then there is an isomorphism 
\begin{equation}
A\{\xi(u)\} \otimes \mathcal{K} \rightarrow A\{\eta(u)\} \otimes \mathcal{K} 
\end{equation}
which is a left $A$-module isomorphism. The isomorphism is given by
\begin{equation}
\xi(u) \otimes x \mapsto \eta(u) \otimes \xi\eta^{-1}(u)x; \ (x \in \mathcal{K}).
\end{equation}
\end{lem}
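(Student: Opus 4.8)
The plan is to exhibit the map in the statement as a $*$-isomorphism of stabilisations and then check that it is $A$-linear. Write $v=\xi(u)$, $w=\eta(u)$, and put $z:=\xi\eta^{-1}(u)=vw^{-1}$, an element of the von Neumann algebra $W^{*}(u)\subseteq B(H)$ generated by $u$. Since $v,w,z$ all lie in the abelian algebra $W^{*}(u)$ they commute with each other and with $u$; moreover $z^{n}=\bigl(\xi\eta^{-1}\bigr)^{n}(u)=\mathbf 1$ because $\xi(\lambda)^{n}=\eta(\lambda)^{n}=\lambda$ for every $\lambda\in\mathrm{sp}(u)$, so the spectrum of $z$ consists of $n$-th roots of unity. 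I would also record the concrete shape of the two extensions: $A\{v\}$ is the norm-closed $*$-subalgebra of $B(H)$ generated by $A$ together with the unitary $v$, and since $v^{n}=u\in A^{+}$ every element of $A\{v\}$ is a norm limit of finite sums of operators $v^{i}a$ and $av^{i}$ with $a\in A$, $0\le i\le n-1$; hence $A\{v\}\otimes\mathcal{K}$ is the closed linear span of the $v^{i}a\otimes x$ ($a\in A$, $x\in\mathcal{K}$, $0\le i\le n-1$), and similarly for $A\{w\}$.

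Next I would define $\Phi\colon A\{v\}\otimes\mathcal{K}\rightarrow A\{w\}\otimes\mathcal{K}$ on this spanning set by
\[
\Phi(a\otimes x)=a\otimes x\quad(a\in A),\qquad\Phi\bigl(v^{i}a\otimes x\bigr)=w^{i}a\otimes z^{i}x,
\]
which for $i=1$ and $a$ ranging over an approximate unit recovers the prescribed formula $v\otimes x\mapsto w\otimes \xi\eta^{-1}(u)x$, and extend $\Phi$ linearly. One then checks that $\Phi$ is multiplicative and $*$-preserving on the dense $*$-subalgebra generated by the $v^{i}a\otimes x$; the only facts that have to be fed in are $v^{n}=u$ (matched on the target by $w^{n}=u$ and $z^{n}=\mathbf 1$), the unitarity of $v$, and that $z\in W^{*}(u)$ commutes with $u$ and with $w$. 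Boundedness of a $*$-homomorphism is automatic, so $\Phi$ extends to the stabilisation. Interchanging the roles of $\xi$ and $\eta$ yields $\Psi\colon A\{w\}\otimes\mathcal{K}\rightarrow A\{v\}\otimes\mathcal{K}$ with $\Psi(w^{i}a\otimes x)=v^{i}a\otimes z^{-i}x$, and $\Psi\Phi$, $\Phi\Psi$ are the respective identities (again by $z^{n}=\mathbf 1$); thus $\Phi$ is an isomorphism, and since it fixes $A\otimes\mathcal{K}$ pointwise it is a left $A$-module isomorphism.

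The step I expect to be the genuine obstacle is the \emph{well-definedness} in the previous paragraph: one must show that whenever a $*$-polynomial in $v$ and $A$ vanishes as an operator on $H$, the corresponding $*$-polynomial in $w$ and $A$ — with the appropriate powers of $z$ inserted — also vanishes. The subtlety is that $z$, although central in $W^{*}(u)$, need not commute with elements of $A$ or of $\mathcal{K}$, so the positions of the $z$-factors must be tracked with care. I would control this by decomposing $z=\sum_{k=0}^{n-1}\omega^{k}e_{k}$ with $\omega=e^{2\pi i/n}$ and $e_{0},\dots,e_{n-1}\in W^{*}(u)$ the spectral projections of $z$ (so $e_{j}e_{k}=\delta_{jk}e_{k}$ and $\sum_{k}e_{k}=\mathbf 1$), so that every identity to be verified becomes a finite $\mathbb{Z}_{n}$-graded bookkeeping in which orthogonality and completeness of the $e_{k}$ force the two sides to collapse to the same expression. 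Once well-definedness is secured, the remaining verifications are routine.
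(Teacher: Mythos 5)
Your write-up goes far beyond the paper's own proof, which consists of the single sentence that the claim ``follows from the equality $\xi(u)=\xi\eta^{-1}(\eta(u))$'' (i.e.\ $v=zw$ in your notation) and verifies nothing further. So you have probed the statement much harder than the author does. Unfortunately, the step where your argument actually breaks is not the well-definedness issue you single out but the multiplicativity you describe as routine. Test it already in the paper's motivating case $A=C(S^1)=C^*(u)$, so that the first tensor legs all commute and the normalization questions disappear, and take $\mathcal{K}=\mathcal{K}(H)$ so that $zx$ makes sense. Your $\Phi$ is the identity on $A\otimes\mathcal{K}$ and sends $v^{i}\otimes x$ to $w^{i}\otimes z^{i}x$. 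Then
\begin{equation*}
(v\otimes x)(v^{n-1}\otimes y)=u\otimes xy,\qquad
\Phi(v\otimes x)\,\Phi(v^{n-1}\otimes y)=(w\otimes zx)(w^{n-1}\otimes z^{n-1}y)=u\otimes zxz^{-1}y ,
\end{equation*}
using $z^{n}=\mathbf 1$. Multiplicativity therefore forces $zxz^{-1}=x$ for every $x\in\mathcal{K}(H)$, i.e.\ $z$ must be a scalar, which happens only when $\xi=\omega\eta$ for a constant root of unity $\omega$. For genuinely different Borel branches $\xi,\eta$ the operator $z=\xi\eta^{-1}(u)$ is a nonconstant function of $u$, so $\Phi$ is not a $*$-homomorphism, and the $\mathbb{Z}_{n}$-graded bookkeeping with the spectral projections $e_{k}$ of $z$ cannot repair this: the obstruction lives entirely in the $\mathcal{K}$ leg, where $z$ really does fail to commute with compact operators. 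The paper's one-line proof has exactly the same lacuna --- it never addresses multiplicativity at all --- so this is a defect of the lemma as stated, not something your argument introduces.

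Two further points. First, your description of $A\{v\}\otimes\mathcal{K}$ as the closed linear span of the elements $v^{i}a\otimes x$ and $av^{i}\otimes x$ presupposes that $v$ normalizes $A$ (so that words such as $v^{i}av^{j}b$ collapse); this holds in the paper's examples but is not among the hypotheses, so it should be stated as an assumption or proved. Second, the statement literally asserts only a \emph{left $A$-module} isomorphism, and if one reads it that way multiplicativity is irrelevant; but then one still has to exhibit a left $A$-module generating set of $A\{v\}\otimes\mathcal{K}$ (which again needs the normalization hypothesis) and check that the assignment $v^{i}a\otimes x\mapsto w^{i}a\otimes z^{i}x$ is well defined and bijective on it --- your spectral decomposition of $z$ is the right tool for that part. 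I would recommend deciding which of the two readings you intend before投资ing more effort; as a $*$-isomorphism given by the displayed formula, the lemma is false except in the degenerate case $\xi=\omega\eta$.
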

\begin{proof}
Follows from the equality $\xi(u)= \xi\eta^{-1}(\eta(u))$.
\end{proof}
\begin{rem}
See remark \ref{rem_torus_morita}.
\end{rem}
\begin{defn}\label{root_n_defn}
A {\it $n^{\mathrm{th}}$ root of identity map} is a Borel-measurable function $\phi \in \ B_{\infty}(\mathbb{C}^*)$  such that 
\begin{equation}\label{root_n_eqn}
(\phi(z))^n = z \ (\forall z \in U(C(X)). 
\end{equation}
 \end{defn}
 \begin{lem}\label{full_sp_lem}
 Let $A$ be a $C^*$-algebra, $u \in U((A \otimes \mathcal{K})^+)$ is such that $[u]\neq 0 \in K_1(A)$ then $\mathrm{sp}(u)= \mathbb{C}^*=\{z \in \mathbb{C} \ | \ |z| = 1\}$.
 \end{lem}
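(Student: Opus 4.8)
The plan is to argue by contraposition: suppose $\mathrm{sp}(u) \subsetneq \mathbb{C}^*$, and show that then $[u] = 0$ in $K_1(A)$. Since $u$ is a unitary in the unitization $(A\otimes\mathcal{K})^+$, we may regard $u$ as an invertible element whose spectrum is the compact subset $\mathrm{sp}(u)$ of the unit circle. If $\mathrm{sp}(u)\neq\mathbb{C}^*$, then there is a point $z_0\in\mathbb{C}^*$ not in $\mathrm{sp}(u)$, so $\mathrm{sp}(u)$ is a proper closed subset of $\mathbb{C}^*\setminus\{z_0\}$, which is an arc, hence homeomorphic to an interval; in particular $\mathrm{sp}(u)$ is contractible inside $\mathbb{C}^*\setminus\{z_0\}$.

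The key step is then a continuous functional calculus / logarithm argument. Choose a branch of the logarithm $\log$ defined and continuous on the open arc $\mathbb{C}^*\setminus\{z_0\}$ containing $\mathrm{sp}(u)$; then $h := \frac{1}{i}\log(u)$ is a well-defined self-adjoint element of $(A\otimes\mathcal{K})^+$ by continuous functional calculus, and $u = \exp(i h)$. Consequently the path $t\mapsto \exp(i t h)$, $t\in[0,1]$, is a norm-continuous path of unitaries in $(A\otimes\mathcal{K})^+$ from $1$ to $u$. A norm-continuous path of unitaries joining $u$ to the identity witnesses $[u]=0$ in $K_1(A)=K_1(A\otimes\mathcal{K})$, since the class in $K_1$ of a unitary in the unitization depends only on its connected component in $U((A\otimes\mathcal{K})^+)$ (equivalently, $[u]=[1]=0$). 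This contradicts the hypothesis $[u]\neq 0$, so we must have $\mathrm{sp}(u)=\mathbb{C}^*$.

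One point that needs a little care is the reduction to the unital setting and the precise relationship $K_1(A)\cong K_1(A\otimes\mathcal{K})$ together with the description of $K_1$ via connected components of the unitary group of the unitization: here I would simply invoke the standard facts from $K$-theory (e.g. from \cite{blackadar:ko} or \cite{murphy}) that $K_1(A)\cong U_\infty((A)^+)/U_\infty((A)^+)_0$ after stabilization, and that two unitaries in the same path-component define the same $K_1$-class. I do not expect any genuine obstacle; the only mild subtlety is ensuring that the chosen arc genuinely contains $\mathrm{sp}(u)$ and avoids the omitted point $z_0$, which is immediate once $z_0\notin\mathrm{sp}(u)$ is fixed, since $\mathrm{sp}(u)$ is compact and the complement of $z_0$ in the circle is an open arc. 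Thus the heart of the argument is just: a proper spectrum on the circle admits a continuous logarithm, hence the unitary is homotopically trivial.
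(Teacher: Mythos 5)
Your proof is correct and is essentially the same argument as the paper's: both exploit the omitted point $z_0$ of the circle to contract $u$ through unitaries to a trivial one (the paper writes the explicit homotopy $z_1e^{i(1-t)\phi}$ toward the scalar $z_1=-z_0$, which is exactly your branch-of-logarithm contraction in concrete form), concluding $[u]=0$ and obtaining a contradiction.
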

 \begin{proof}
  $\mathrm{sp}(u) \subset \mathbb{C}^*$ since $u$ is an unitary. Suppose $z_0 \in \mathbb{C}$ be such that  $z_0 \notin \mathrm{sp}(u)$ and $z_1 = -z_0$. Let $\varphi: \mathrm{sp}(u) \times [0,1] \to \mathbb{C}^*$ be such that
  \begin{equation*}
  \varphi(z_1e^{i \phi}, t)= z_1 e^{i (1-t)\phi}; \ \phi \in (-\pi, \pi), \ t \in [0,1].
  \end{equation*}
  There is a homotopy $u_t = \varphi(u, t)\in  U((A \otimes \mathcal{K})^+)$ such that $u_0 = u$, $u_1 = z_1$. From $[z_1] = 0 \in K_1(A)$ it follows that $[u]=0\in K_1(A)$. So there is a contradiction which proves this lemma.
  
 \end{proof}

\subsection{Universal coefficient theorem}

Universal coefficient theorem \cite{blackadar:ko} establishes (in particular) a relationship between $K$ - theory and $K$- homology. For any $C^*$-algebra $A$ there is a natural homomorphism
\begin{equation}\label{free_spec_eqn}
\gamma: KK_1(A, \mathbb{C}) \rar \mathrm{Hom}(K_1(A), K_0(\mathbb{C})) \approx \mathrm{Hom}(K_1(A), \mathbb{Z})
\end{equation}
which is the adjoint of following pairing
\begin{equation}\nonumber
KK(\mathbb{C}, A) \otimes KK(A, \mathbb{C}) \rar KK(\mathbb{C}, \mathbb{C}).
\end{equation}
If $\tau \in KK^1(A, \mathbb{C})$ is represented by extension
\begin{equation}\nonumber
0 \rar  \mathbb{C} \rar D \rar A \rar  0
\end{equation}
then $\gamma$ is given as connecting maps $\partial$ in the associated six-term exact sequence of $K$ theory

 \[
\begin{diagram}
 \node{ K_0(\mathbb{C})) } \arrow{e}  \node{K_0(D)}  \arrow{e}  \node{K_0(A)}  \arrow{s,r}{\partial}  \\
\node{K_1(A)} \arrow{n, l}{\partial} \node{K_1(D)} \arrow{w} \node{K_1(\mathbb{C})} \arrow{w}
\end{diagram}
\]
If $\gamma(\tau)=0$ for an extension $\tau$ then the six-term $K$-theory exact sequence degenerates into two short exact sequences 
\begin{equation}\nonumber
0 \rar K_i(A) \rar \K_i(D) \rar K_i(\mathbb{C}) \rar 0 \ (i=0,1)
\end{equation}
and thus determines an element $\kappa(\tau)\in \mathrm{Ext}^1(K_*(A), K_*(\mathbb{C})$. 
In result we have a sequence of abelian group homomorphisms
\begin{equation}\nonumber
\mathrm{Ext}^1(K_0(A), K_0(\mathbb{C})) \rar KK^1(A, \mathbb{C}) \rar \mathrm{Hom}(K_1(A), K_0(\mathbb{C}))
\end{equation}
such that composition of the homomorphisms is trivial. Above sequence can be rewritten by following way
\begin{equation}\label{uct_c}
\mathrm{Ext}^1(K_0(A), \mathbb{Z}) \rar K^1(A) \rar \mathrm{Hom}(K_1(A), \mathbb{Z})).
\end{equation}
If $G$ is an abelian group that 
\begin{equation}\nonumber
\mathrm{Ext}^1(G, \mathbb{Z}) = \mathrm{Ext}^1(G_{tors}, \mathbb{Z}),
\end{equation}
\begin{equation}\nonumber
 \mathrm{Hom}(G, \mathbb{Z}) =  \mathrm{Hom}(G / G_{tors}, \mathbb{Z})).
\end{equation}
From (\ref{uct_c}) it follows that $K^1(A)$ depends on $K_0(A)_{tors}$ and $K_1(A)/K_1(A)_{tors}$. We say that dependence(\ref{uct_c}) on $K_0(A)_{tors}$ is a {\it torsion special case} and dependence (\ref{free_spec_eqn}) of $K^1(A)$ on $K_1(A)/K_1(A)_{tors}$ is a {\it free special case}.

\subsection{Free special case}\label{free_spec_case}

\begin{exm}\label{circle_cov_proj}
The $n$- listed coverings of example \ref{circle_cov} can be constructed algebraically.  From (\ref{uct_c}) it follows that  $K_1(C(S^1))\approx \mathbb{Z}$. Let $u \in U(C(S^1))$ is such that $[u] \in K_1(S^1)$ is a generator of $K_1(S^1)$. Let $C(S^1)\rar B(H)$ be a faithful representation and $\phi$ is an $n^{\mathrm{th}}$ root of identity map. If $v = \phi(u) \in B(H)$ then $v^n = u$ and $v \notin C(S^1)$. According to definition \ref{gen_by_v_extension} we have a $*$ - homomorphism $C(S^1)\rightarrow C(S^1)\{v\}$ which corresponds to $n$ listed covering projection of the $S^1$.
\end{exm}
\begin{empt}\label{free_special_case_general} {\it General construction}.
Construction of example \ref{circle_cov_proj} can be generalized. Let $A$ be a $C^*$ - algebra such that $K^1(A)\approx G \oplus \mathbb{Z}$. From (\ref{uct_c}) it follows that
\begin{equation}\label{k1_direct_sum} 
K_1(A)=G' \oplus \mathbb{Z}[u]
\end{equation}
where $u \in U((A \otimes \mathcal{K})^+)$. If $\phi$ is an $n^{\mathrm{th}}$ - root of identity map then we have a generated by $\{\phi(u)\}$ extension $A\rightarrow A\{\phi(u)\}$. Sometimes this extension is a noncommutative covering projection.

\end{empt}
\begin{exm}\label{torus_uni_cov_sample}
Let $A_{\theta}$ be a noncommutative torus, $K_1(A_{\theta})\approx \mathbb{Z}^2$ Let $u, v \in U(A)$ be representatives of generators of $K^1(A_{\theta})$ a $\mathrm{sp}(u)=\mathrm{sp}(v)=\{z \in \mathbb{C} \ | \ |z| = 1 \}$. Following $*$-homomorphisms
\begin{equation}\nonumber
A_{\theta}\rightarrow A_{\theta}\{\phi(u)\},
\end{equation}
\begin{equation}\nonumber
A_{\theta}\rightarrow A_{\theta}\{\phi(v)\}
\end{equation}
are particular cases of noncommutative covering projections which are described in subsection \ref{cov_pr_nc_torus}.
 \end{exm}
\begin{exm}\label{s_3_covering_sample}
It is known that $S^3$ is homeomorphic to $SU(2)$, $K_1(C(SU(2))) \approx \mathbb{Z}$ and $K_1(C(SU(2)))$ is generated by unitary $u \in U(C(SU(2) \otimes \mathbb{M}_2(\mathbb{C}))$.  Element $u$ can be regarded as the natural map $SU(2) \rar \mathbb{M}_2(\mathbb{C})$ and $\mathrm{sp}(u)=\{z\in \mathbb{C}\ | \ |z|=1\}$. Denote by $A=C(SU(2)) \otimes \mathbb{M}_2(\mathbb{C})$. Let $\phi$ be a $2^{\mathrm{th}}$ - root of identity map, and $v = \phi(u)$. There is an extension $A\rightarrow A\{v\}$. Both $A$ and $A\{v\}$ are continuous trace algebras. The  $\mathbb{Z}_2$ group acts on $A\{v\}$ such that action of nontrivial element $g\in \mathbb{Z}_2$ is given by
\begin{equation}\nonumber
gv = -v.
\end{equation}
Let $\rho: A\{v\} \rightarrow B(H)$ be a irreducible representation. Then $V = \rho(v)$ is a $2 \times 2$ unitary matrix. Suppose that $\rho$ is such that
by
\begin{equation}\nonumber
\rho(v)=\begin{pmatrix}
1 & 0 \\
0 & -1 \\
\end{pmatrix}.
\end{equation}
We have
\begin{equation}\nonumber
\rho_g(v)=\rho(gv)\begin{pmatrix}
-1 & 0 \\
0 & 1 \\
\end{pmatrix}.
\end{equation}
Above matrices are unitary equivalent, i. e.
\begin{equation}\nonumber
\begin{pmatrix}
1 & 0 \\
0 & -1 \\
\end{pmatrix} = 
\begin{pmatrix}
0 & -1 \\
1 & 0 \\
\end{pmatrix}
\begin{pmatrix}
-1 & 0 \\
0 & 1 \\
\end{pmatrix}
\begin{pmatrix}
0 & 1 \\
-1 & 0 \\
\end{pmatrix}
\end{equation}.
  
So the representation $\rho$ is unitary equivalent to the $\rho_g$ and action of $g$ is not strictly outer,  extension $f: A \rightarrow A\{v\}$ does not satisfy definition \ref{nc_fin_cov_pr_defn}, i.e. $f: A \rightarrow A\{v\}$ is not a noncommutative covering projection. Algebra $A$ does not have nontrivial noncommutative covering projections because
\begin{enumerate}
\item $A$ is a continuous trace algebra,
\item $\hat A \approx S^3$,
\item $\pi_1(S^3)=0$, i.e. $S^3$ does not have nontrivial covering projections.
\end{enumerate}
\end{exm}
\begin{rem}\label{pi1_belongs}
This construction supplies a covering projection if $x\in K_1(X)$ belongs to image of $\pi_1(X) \rightarrow K_1(X)$.
\end{rem}

\subsection{Torsion special case}\label{torsion_special_case}

\begin{exm}\label{n_cov_sample}
Universal covering from example \ref{n_cirle_cov} can be constructed algebraically.
Let $f: S^1 \rar S^1$ be a $n$ listed covering projection  of the circle, $C_f$ is the (topological) mapping cone of $f$. $C(f): C(S^1) \rar C(S^1)$ is a corresponding *- homomorphism of $C^*$-algebras ($u \mapsto u^n$),
where $u \in U(C(S^1))$ is such that $[u]\in K_1(C(S^1))$ is a generator. Algebraic mapping cone \cite{blackadar:ko}  $C_{C(f)}$  of $C(f)$ corresponds to the topological space $C_f$. $C_{C(f)}$ is an algebra of continuous maps $f [0, 1)\rar U(\mathbb{C})$ such that

\begin{equation}\nonumber
f(0) = \sum_{k \in \mathbb{Z}} a_k u^{kn}, \ a_k \in \mathbb{C}.
\end{equation}
A map $v = (x \mapsto  u)$ ($\forall x \in [0, 1]$) is such that $v^i\notin M(C(C_f))$ ($i=1,...,n-1$), $v^n \in M(C_{C(f)})$. Homomorphism $C_{C(f)}\rar C_{C(f)}\{v\}$ corresponds to a $n$-listed covering projection from the example \ref{n_cirle_cov}.

\end{exm}
\begin{empt}\label{tors_special_case_general}{\it General construction}.
Above construction can be generalized. Let $A$ be a $C^*$ - algebra such that $K^1(A)= G \oplus \mathbb{Z}_n$, where $G$ is an abelian group. From (\ref{uct_c}) it follows that $K_0(A) \approx G' \oplus \mathbb{Z}_n$.
Let $Q^s(A)= M(A \otimes  \mathcal{K})/ (A \otimes \mathcal{K}$) be the stable multiplier algebra of $C^*$ - algebra $A$. Then from \cite{blackadar:ko} it follows that $K_1(Q^s(A))=K_0(A)$. Let $u\in U(Q^s(A))$ be such that $K_1(Q^s(A))=G' \oplus \mathbb{Z}_n[u]$. Let $\phi$ be a $n^{\mathrm{th}}$ root of identity map such that $\phi(u^n)=u$. Let $p:  M(A \otimes  \mathcal{K}) \rar  M(A \otimes  \mathcal{K}) / (A \otimes  \mathcal{K})$ be a natural surjective *- homomorphism. It is known \cite{blackadar:ko} that unitary element $v \in  U(Q^s)$ can be lifted to an unitary element $v'\in U(M(A \otimes \mathcal{K}))$ (i.e. $v = p(v')$) if and only if $[v]=0\in K^1(Q^s(A))$.  From $n[u]=[u^n]=0$ it follows that there is an unitary $w \in U(M(A \otimes \mathcal{K})$) such that $p(w)=u^n$. Let $M(A \otimes  \mathcal{K})\rightarrow B(H)$ be a faithful representation, then $\phi(w)\in U(B(H))$. If $\phi(w)\in M(A \otimes  \mathcal{K}))$ then $p(\phi(w)) = u$, however it is impossible because $[u] \neq 0 \in K^1(Q^s(A))$. So $\phi(w) \notin M(A \otimes  \mathcal{K})$ and similarly  $\phi(w)^i \notin M(A \otimes  \mathcal{K})$ ($i = 1,..., n -1)$. So we have a generated by $\phi(w)$ extension $A\otimes \mathcal{K} \rightarrow (A \otimes \mathcal{K})\{\phi(w)\}$ which can be a noncommutative covering projection. Example \ref{n_cov_sample} is a particular case of this general construction.

\end{empt}
\begin{exm}
Let $O_n$ be a Cuntz algebra \cite{blackadar:ko}, $K_0(O_n)=\mathbb{Z}_{n-1}$. Construction \ref{tors_special_case_general} supplies a $\mathbb{Z}_{n-1}$ - Galois extension $f:O_n \otimes \mathcal{K} \to \widetilde{O}_n$. However it is not known is $f$ strictly outer.
\end{exm}
\subsection{A noncommutative generalization of $K_{11}(X)$.}
Above construction can generalize $K_{11}(X)$ group. Suppose that $K_1(X)$ is group generated by $x_1, ..., x_n$. Let $x\in \{x_1,...,x_n\}$ be a generator.  Construction of  \ref{free_special_case_general}, \ref{tors_special_case_general}
supplies extension of $A$ which is associated with $x$. The element $x$ is said to be {\it proper} if the extension is a noncommutative covering projection. Generalization of $K_{11}(X)$ is a generated by proper elements subgroup of $K^1(A)$.

\section{Conclusion}
The presented here theory supplies algebraic construction of covering projections. These projections are well known for commutative case. Example \ref{torus_uni_cov_sample} is principally new application of the theory. It is interesting to find other nontrivial examples of this theory.

\end{document}